\newtheorem{theorem}{Theorem}[section]
\newtheorem{Lemma}[theorem]{Lemma}
\newtheorem{Proposition}[theorem]{Proposition}
\newtheorem{Corollary}[theorem]{Corollary}
\newtheorem{Definition}[theorem]{Definition}
\newtheorem{Example}[theorem]{Example}
\begin{document}
\title[TOPOLOGICALLY STABLE EQUICONTINUOUS NON-AUTONOMOUS SYSTEMS]{TOPOLOGICALLY STABLE EQUICONTINUOUS NON-AUTONOMOUS SYSTEMS} 
\author[Abdul Gaffar Khan, Pramod Kumar Das, Tarun Das]{Abdul Gaffar Khan$^{1}$, Pramod Kumar Das$^{2}$ and Tarun Das$^{1}$}                 
\subjclass[2010]{Primary 54H20 ; Secondary 37C75, 37C50}
\keywords{Expansivity, Shadowing, Topological Stability \vspace*{0.08cm}\\ 
\vspace*{0.01cm}
\Letter{Tarun Das} \\
\vspace*{0.08cm}
tarukd@gmail.com \\
\vspace*{0.01cm}
Abdul Gaffar Khan \\
\vspace*{0.08cm}
gaffarkhan18@gmail.com\\
\vspace*{0.01cm}
Pramod Kumar Das\\
\vspace*{0.08cm}
pramodkumar.das@nmims.edu\\
\vspace*{0.01cm}
\textit{$^{1}$Department of Mathematics, Faculty of Mathematical Sciences, University of Delhi, Delhi, India.}\\ 
\hspace*{0.11cm}\textit{$^{2}$School of Mathematical Sciences, Narsee Monjee Institute of Management Studies, Vile Parle, Mumbai-400056,  India.} 
}

\begin{abstract}
We find sufficient conditions for commutative non-autonomous systems on certain metric spaces to be topologically stable. In particular, we prove that (i) Every mean equicontinuous, mean expansive system with strong average shadowing property is topologically stable. (ii) Every equicontinuous, recurrently expansive system with almost shadowing property is topologically stable. (iii) Every equicontinuous, expansive system with shadowing property is topologically stable.
\end{abstract}
\maketitle

\section{Introduction} 

In experiments, it is seldom possible to measure a physical quantity without any error. Therefore, only those properties that are unchanged under small perturbations are physically relevant. In topological dynamics, a meaningful way to perturb a system is through a morphism (continuous map).  
\medskip

A homeomorphism (resp. continuous map) $f$ on a metric space $X$ is said to be topologically stable if for every $\epsilon>0$ there exists $\delta>0$ such that if $h$ is another homeomorphism (resp. continuous map) on $X$ satisfying $d(f(x),h(x))<\delta$ for all $x\in X$ then there is a continuous map $k:X\rightarrow X$ satisfying $f\circ k = k\circ h$ and $d(k(x),x)<\epsilon$ for all $x\in X$.        
\medskip 

This particular concept of stability is popularly known as topological stability which was originally introduced \cite{W} for a diffeomorphism on compact smooth manifold. By looking at the significance of the concept, it is worth to identify those dynamical properties which imply topological stability. One of such result in topological dynamics is ``Walters stability theorem" which states that expansive homeomorphisms with shadowing property on compact metric spaces are topologically stable \cite{WO}.            
\medskip

The notion of expansivity expresses the worse case unpredictability of a system. Although such unpredictable behaviour of symbolic flows was recognized earlier, the concept of expansivity for homeomorphisms on general metric spaces was introduced \cite{UU} in the middle of the twentieth century. The expansive behaviour of continuous maps is popularly known as positive expansivity \cite{CKE}. 
\medskip

For a continuous map $f$ on a metric space $X$ and fixed $x_0\in X$, identifying those $x\in X$ whose orbit follow that of $x_0$ for a long time and hence, understanding the asymptotic behaviour of $f^n(x)$ relative to $f^n(x_0)$ can provide deep insight of the system. Anosov closing lemma \cite{A} provides us with such information for a differentiable map on compact smooth manifold. Although the terminology ``shadowing" was originated from this lemma in differentiable dynamics, the notion of shadowing property has played a central role in topological dynamics. The idea behind the notion of shadowing is to guarantee the existence of an actual orbit with a particular behaviour by giving evidence of the existence of a pseudo orbit with the same behaviour. For general qualitative study on shadowing property, one may refer to \cite{AHT}.   
\medskip

In \cite{TDT}, authors studied notions of expansivity and shadowing property in the context of non-autonomous systems and proved ``Walters stability theorem" in this settings. Such systems occur as mathematical models of real life problems affected by two or more distinct external forces in different time span. These systems appear in various branches like informatics, quantum mechanics, biology etcetera. The earliest known example with connection to biology arise while solving the famous ``mathematical rabbit problem" which was appeared in the book ``Liber Abaci" written by Fibonacci in the year 1202. This problem can be written in the form of second order difference equation \cite{CD} which is also known as Fibonacci sequence and in this representation every state depends explicitly on the current time and therefore, it can be seen from the context of non-autonomous system. Because of such frequent occurrence of non-autonomous systems in practical problems, it is important to know about stability of such systems. 
\medskip

In this paper, we prove the following results which provide sufficient conditions for non-autonomous systems to be topologically stable.                 
\medskip

Let $F$ be a commutative non-autonomous system on a Mandelkern locally compact metric space.
\\
(i) If $F$ is mean equicontinuous, mean expansive and has strong average shadowing property, then it is topologically stable.    
\\
(ii) If $F$ is equicontinuous, recurrently expansive and has almost shadowing property, then it is topologically stable.
\\
(iii) If $F$ is equicontinuous, expansive and has shadowing property, then it is topologically stable.  
\medskip

The first part of the above result shows that average shadowing property is not only useful in the investigation of chaos \cite{BD} but also in the investigation of stability. The study of such useful property of a system was also initiated in the context of flows \cite{GSX}, iterated function systems \cite{B} and non-autonomous systems \cite{MRT}. The second part shows the connection of topological stability with an weaker form of shadowing property called almost shadowing property which is useful when one is not interested in the initial behaviour of the system. The final part improves the conclusion of Theorem 4.1 \cite{TDT} under stronger hypothesis.  
\medskip

Before proving these results we introduce the above mentioned notions and study their general properties. 

\section{Definitions and General Properties}

Throughout this paper $\mathbb{Z}$, $\mathbb{N}$ and $\mathbb{N}^+$ denote the set of all integers, the set of all non-negative integers and the set of all positive integers respectively. A pair $(X,d)$ denotes a metric space with metric $d$ and if no confusion arises of the concerned metric $d$, then we simply write $X$ is a metric space. All maps between metric spaces are assumed to be uniformly continuous. We say that the collection $F=\lbrace f_i:X\rightarrow X\rbrace_{i \in \mathbb{N}^{+}}$ is a non-autonomous system (NAS) on $X$ generated by the sequence of maps $\lbrace f_{i}\rbrace_{i\in\mathbb{N}^+}$. $F$ is said to be autonomous system generated by $f$, if $f_{i} = f$ for all $i \in \mathbb{N}^{+}$ and in this case, we simply write $F=\langle f \rangle$. We say that $F$ is periodic with period $m \in \mathbb{N}^{+}$ if $f_{mi+j} = f_{j}$ for all $i,j \in \mathbb{N}^{+}$. $F$ is said to be commutative if $f_{i}\circ f_{j} = f_{j}\circ f_{i}$ for all $i, j \in \mathbb{N}^{+}$. We say that $F$ is surjective if $f_{j}$ is surjective for all $j\in \mathbb{N}^{+}$. The set of all NAS on $X$, the set of all NAS of period $m$ on $X$ and the set of all commutative NAS on $X$ are denoted by $N(X)$, $N_{m}(X)$ and $NC(X)$ respectively.  
\medskip

Let $(X,d)$ and $(Y,p)$ be metric spaces. The product of $F\in N(X)$ and $G\in N(Y)$ is defined as $F\times G = \lbrace f_{i}\times g_{i} : X\times Y \rightarrow X\times Y\rbrace_{i \in \mathbb{N}^{+}}$, where $X\times Y$ is equipped with the metric $q((x_{1}, y_{1}), (x_{2}, y_{2})) =$ max $\lbrace d(x_{1}, x_{2}), p(y_{1}, y_{2})\rbrace$. For $F\in N(X)$ and $n\in\mathbb{N}$, we denote $F_n = f_{n}\circ f_{n-1}\circ . . .  f_{1}\circ f_{0}$, where $f_{0}$ denotes the identity map on $X$. For any $j \leq k$, we define $F_{[j, k]} = f_k\circ f_{k-1}\circ . . .\circ f_{j+1}\circ f_j$. For any $k \in \mathbb{N}^{+}$, the $k^{th}$-iterate of $F$ is given by $F^k = \lbrace F_{[(i-1)k+1,ik]}\rbrace_{i \in \mathbb{N}^{+}}$.  
\medskip

$F\in N(X)$ is said to be equicontinuous if the family $\lbrace f_i\rbrace_{i\in\mathbb{N}^{+}}$ is equicontinuous i.e. for every $\epsilon > 0$ there exists $\delta > 0$ such that $d(x,y) < \delta$ implies $d(f_{i}(x), f_{i}(y)) < \epsilon$ for all $i \in \mathbb{N}^{+}$. 
\medskip

We now introduce mean equicontinuous NAS and give examples satisfying the same.    

\begin{Definition}
$F\in N(X)$ is said to be mean equicontinuous (ME) if for every $\epsilon > 0$ there exists $\delta > 0$ such that if a pair of sequences $\lbrace x_{i}\rbrace_{i=0}^{\infty}$ and $\lbrace y_{i}\rbrace_{i=0}^{\infty}$ satisfy $\frac{1}{n}\sum_{i=0}^{n-1}d(x_{i}, y_{i})<\delta$, then $\frac{1}{n}\sum_{i=0}^{n-1}d(f_{j}(x_{i}), f_{j}(y_{i})) < \epsilon$ for all $j\in\mathbb{N}^{+}$. A continuous map $f$ is said to be mean continuous (MC) if $F=\langle f\rangle$ is ME. A homeomorphism $f$ is said to be mean equivalence (MEQ), if both $f$ and $f^{-1}$ are MC. 
\label{D2.1}
\end{Definition} 

We say that $F\in N(X)$ and $G\in N(Y)$ are uniformly conjugate, if there exists a uniform equivalence $h : Y\rightarrow X$ such that $f_{n}\circ h = h\circ g_{n}$ for all $n\in \mathbb{N}$. In addition, if $h$ is MEQ then we say that $F$ and $G$ are mean conjugate. We say that a property of a NAS is a uniform dynamical (resp. mean dynamical) property if it is preserved under uniform (resp.  mean) conjugacy. One can easily check that, every mean continuous map is uniformly continuous. Hence every uniform dynamical property is a mean dynamical property. 

\begin{Example}
(i) An isometry is MEQ and a contraction map is MC. 
\\
(ii) The tent map $f:[0,1]\rightarrow [0,1]$ given by $f(x) = 2 $min$\lbrace x, 1-x\rbrace$ is MC.
\\
(iii) Let $X =\prod_{i\in \mathbb{Z}} X_{i}$ be equipped with the metric $d(x,y)=\sum_{i=-\infty}^{\infty}\frac{|x_{i} - y_{i}|}{2^{|i|}}$, where $X_{i} = \lbrace 0, 1\rbrace$. Let $f : X\rightarrow X$ be given by $f(x) = y$, where $y_{i} = x_{i+1}$ for all $i\in \mathbb{Z}$. Then, $f$ is MEQ. Further, since for all $x,y\in X$, we have $d(f(x), f(y)) \leq 2d(x, y)$ and $d(f^{-1}(x), f^{-1}(y)) \leq 2d(x, y)$, therefore the system $F=\lbrace f, f^{-1}, f, f^{-1}, \underbrace{f^{-1}, f^{-1}}_\text{$2$-times}, \underbrace{f, f}_\text{$2$-times}, 
\\
f, f^{-1},\underbrace{f^{-1}, f^{-1}}_\text{$2$-times},\underbrace{f, f}_\text{$2$-times}, \underbrace{f, f, f}_\text{$3$-times},\underbrace{f^{-1}, f^{-1}, f^{-1}}_\text{$3$-times},$ $\underbrace{f^{-1}, f^{-1}, f^{-1}, f^{-1}}_\text{$4$-times},\underbrace{f, f, f, f}_\text{$4$-times}...  \rbrace$ is ME. 
\end{Example}

\begin{Proposition}
If $F\in N(X)$ and $G\in N(Y)$, then following statements hold. 
\begin{enumerate}
\item[(a)] $F$ and $G$ are ME if and only if $F\times G$ is ME.
\item[(b)] If $F$ is ME, then $F^{k}$ is ME for all $k\in \mathbb{N}^{+}$.
\end{enumerate}
\label{P3.2}
\end{Proposition}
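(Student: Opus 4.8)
The plan is to read off both parts directly from Definition~\ref{D2.1}, using only the elementary inequalities $a,b\le \max\{a,b\}\le a+b$ for the max-metric $q$ in part (a), and an iteration of the mean-equicontinuity estimate in part (b).

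For part (a) I would prove the two implications separately. \emph{Forward:} assuming $F$ and $G$ are ME, fix $\epsilon>0$ and pick $\delta_1,\delta_2>0$ witnessing ME of $F$ and of $G$ for the value $\epsilon/2$; set $\delta=\min\{\delta_1,\delta_2\}$. If two sequences in $X\times Y$ have average $q$-distance below $\delta$, then (since each coordinate distance is dominated by $q$) their projections to $X$ and to $Y$ have average distances below $\delta_1$ and $\delta_2$ respectively; applying $f_j$, resp. $g_j$, makes these averages drop below $\epsilon/2$, and since $q\le d+p$ on the images, adding the two estimates gives average $q$-distance below $\epsilon$, uniformly in $j$. \emph{Converse:} assuming $F\times G$ is ME, fix a point $y_0\in Y$; given sequences $\{x_i\},\{x_i'\}$ in $X$, apply the ME condition for $F\times G$ to $\{(x_i,y_0)\},\{(x_i',y_0)\}$. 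Because the second coordinates agree, $q((x_i,y_0),(x_i',y_0))=d(x_i,x_i')$ and likewise $q((f_j\times g_j)(x_i,y_0),(f_j\times g_j)(x_i',y_0))=d(f_j(x_i),f_j(x_i'))$, so the estimate for $F\times G$ is exactly the estimate needed for $F$; symmetrically for $G$.

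For part (b) the crux is a uniform closure of ME under composition: if $F$ is ME, then for every $\epsilon>0$ there is $\delta>0$, depending only on $\epsilon$ and $k$, such that whenever two sequences have average distance below $\delta$, their images under \emph{any} length-$k$ composition $f_{j_1}\circ\cdots\circ f_{j_k}$ have average distance below $\epsilon$. I would prove this by induction on $k$, the case $k=1$ being Definition~\ref{D2.1}. In the inductive step, given $\epsilon>0$ take the threshold $\delta_{k-1}$ provided by the claim for length-$(k-1)$ compositions, and use ME of $F$ to choose $\delta>0$ so that average distance below $\delta$ forces average distance below $\delta_{k-1}$ after applying any single map $f_l$; writing a length-$k$ composition as a single map followed by a length-$(k-1)$ composition and chaining the two estimates gives the result. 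This is where it is essential that the $\delta$ in Definition~\ref{D2.1} does not depend on the index $j$. Since the $j$-th map of $F^k$ is precisely the length-$k$ composition $F_{[(j-1)k+1,jk]}=f_{jk}\circ\cdots\circ f_{(j-1)k+1}$, specializing the claim to this composition shows $F^k$ is ME.

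I expect the only delicate point to be the bookkeeping in the inductive step of (b): the intermediate thresholds must be chosen in the correct order and the uniformity over the index tuple carried along at each stage. Part (a) should be routine once the two max-metric inequalities are in hand.
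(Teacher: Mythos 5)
Your proposal is correct and follows essentially the same route as the paper, which merely hints at using $p,q\le\max\{p,q\}\le p+q$ for (a) and says (b) ``follows from the definition''; your max-metric argument for (a) (with the fixed-second-coordinate trick for the converse) and your induction on the length of the composition for (b) are just the natural fleshing-out of that sketch. The key uniformity you highlight --- that the $\delta$ in Definition~\ref{D2.1} is independent of the index $j$ --- is indeed what makes the chaining in (b) legitimate.
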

\begin{proof}
For the proof of (a), use $p,q \leq max\lbrace p, q\rbrace \leq p + q$ for all $p, q \geq 0$. Proof of (b) follows from the definition.   
\end{proof} 

$F\in N(X)$ is said to be expansive \cite{TDT} with expansive constant $0 < \mathfrak{c} < 1$ if for each pair of distinct points $x, y\in X$, there exists $n\in \mathbb{N}$ such that $d(F_{n}(x), F_{n}(y)) > \mathfrak{c}$. In literature, if $F=\langle f\rangle$ is expansive, then $f$ is said to be positively expansive. Recall from \cite{CKE} that, if there exists a continuous injective map $f$ on compact $X$ such that $F=\langle f\rangle$ is expansive, then $X$ is finite. The following examples justify that this is not true for non-autonomous systems. 

\begin{Example}
(i) Let $X =\lbrace \frac{1}{m}, 1-\frac{1}{m}:m\in \mathbb{N}^{+}\rbrace$ and $f : X\rightarrow X$ be given by $f(0) = 0$, $f(1) = 1$ and $f(x) = x^{+}$ otherwise, where $x^+$ is the immediate right to $x$. Then, one can check that $F = \lbrace f, f^{-1}, f^{-2}, f^{2}, f^{3}, f^{-3}, f^{-4}, f^{4}, . . . \rbrace$ is expansive with expansivity constant $ 0 < \alpha < \frac{1}{6}$.
\\
(ii) Let $X =\prod_{i\in \mathbb{Z}} X_{i}$ be equipped with the metric $d(x,y)=\sum_{i=-\infty}^{\infty}\frac{|x_{i} - y_{i}|}{2^{|i|}}$, where $X_{i} = \lbrace 0, 1\rbrace$. Let $f : X\rightarrow X$ be given by $f(x) = y$, where $y_{i} = x_{i+1}$ for all $i\in \mathbb{Z}$. One can check that systems $F=\lbrace f, f^{-1}, f, f^{-1}, \underbrace{f^{-1}, f^{-1}}_\text{$2$-times}, \underbrace{f, f}_\text{$2$-times}, f, f^{-1}, \underbrace{f^{-1}, f^{-1}}_\text{$2$-times}, \underbrace{f, f}_\text{$2$-times}, \underbrace{f, f, f}_\text{$3$-times},$ $\underbrace{f^{-1}, f^{-1}, f^{-1}}_\text{$3$-times},$ $\underbrace{f^{-1}, f^{-1}, f^{-1}, f^{-1}}_\text{$4$-times},\underbrace{f, f, f, f}_\text{$4$-times}...  \rbrace$ and $G=\lbrace f, f^{-1}, f^{-2}, f^{2}, f^{3}, f^{-3}, f^{-4}, f^{4}
\\
, . . . \rbrace$ are expansive with expansivity constant $ 0 < \alpha < \frac{1}{2}$.
\label{E2.3}
\end{Example}

This discussion allow us to study the following notions both of which implies expansivity.  

\begin{Definition}
(i) $F\in N(X)$ is said to be recurrently expansive if there exists $\mathfrak{c}\in (0,1)$ such that any distinct pair $x,y\in X$ satisfy $\limsup\limits_{n\rightarrow\infty}d(F_{n}(x), F_{n}(y)) > \mathfrak{c}$.
\\
(ii) $F\in N(X)$ is said to be mean expansive if there exists $\mathfrak{c}\in (0,1)$ such that any distinct pair $x,y\in X$ satisfy $\limsup\limits_{n\rightarrow \infty} \frac{1}{n}\sum_{i=0}^{n-1}d(F_{i}(x), F_{i}(y)) > \mathfrak{c}$.  
\label{D2.5}
\end{Definition}  

\begin{Proposition}
$F=\langle f\rangle$ is mean expansive implies it is recurrently expansive implies $f$ is positively expansive injective map. Consequently, there does not exist any autonomous mean expansive or recurrently expansive system on non-discrete compact metric space. 
\label{P3.3}
\end{Proposition}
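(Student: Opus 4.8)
The plan is to verify the two implications in order and then read off the corollary from the result of \cite{CKE} recalled just above. Note first that for $F=\langle f\rangle$ one has $F_{n}=f^{n}$ for every $n\in\mathbb{N}$ (with $F_{0}$ the identity), so for a distinct pair $x,y\in X$ the three conditions in play become: $f$ positively expansive, i.e. $d(f^{n}(x),f^{n}(y))>\mathfrak{c}$ for some $n$; $F$ recurrently expansive, i.e. $\limsup_{n\to\infty}d(f^{n}(x),f^{n}(y))>\mathfrak{c}$; and $F$ mean expansive, i.e. $\limsup_{n\to\infty}\frac{1}{n}\sum_{i=0}^{n-1}d(f^{i}(x),f^{i}(y))>\mathfrak{c}$.

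The engine of the first implication is the elementary Ces\`aro estimate: for every sequence $(a_{n})_{n\ge 0}$ in $[0,\infty)$ one has $\limsup_{n\to\infty}\frac{1}{n}\sum_{i=0}^{n-1}a_{i}\le\limsup_{n\to\infty}a_{n}$. (If the right-hand side is $L<\infty$, fix $\epsilon>0$, choose $N$ with $a_{i}\le L+\epsilon$ for $i\ge N$, split the average at $N$, and let $n\to\infty$: the finite initial block contributes $0$ in the limit, leaving $L+\epsilon$, and $\epsilon$ is arbitrary.) Applying this to $a_{i}=d(f^{i}(x),f^{i}(y))$, if $F=\langle f\rangle$ is mean expansive with constant $\mathfrak{c}$ and $x\ne y$, then $\limsup_{n}d(f^{n}(x),f^{n}(y))\ge\limsup_{n}\frac{1}{n}\sum_{i=0}^{n-1}d(f^{i}(x),f^{i}(y))>\mathfrak{c}$, so $F$ is recurrently expansive with the same constant $\mathfrak{c}$.

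For the second implication assume $F=\langle f\rangle$ is recurrently expansive with constant $\mathfrak{c}$. Given distinct $x,y$, the inequality $\limsup_{n}d(f^{n}(x),f^{n}(y))>\mathfrak{c}$ produces some $n$ with $d(f^{n}(x),f^{n}(y))>\mathfrak{c}$, so $F$ is expansive, i.e. $f$ is positively expansive. Injectivity is then immediate: if $f(x)=f(y)$ with $x\ne y$, then $f^{n}(x)=f^{n}(y)$ for all $n\ge 1$, whence $\limsup_{n}d(f^{n}(x),f^{n}(y))=0\le\mathfrak{c}$, contradicting recurrent expansivity; hence $f$ is injective.

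Finally, for the consequence: since mean expansivity implies recurrent expansivity, it is enough to rule out an autonomous recurrently expansive system $F=\langle f\rangle$ on a non-discrete compact metric space $X$. By the above, such an $f$ would be a (uniformly) continuous injective map on the compact space $X$ with $\langle f\rangle$ expansive, and the result recalled from \cite{CKE} then forces $X$ to be finite, hence discrete---a contradiction. I do not expect a genuine obstacle here; the only point that needs a little care is tracking that one and the same expansivity constant passes through both implications, so that no re-choosing of constants is required.
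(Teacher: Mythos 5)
Your proof is correct and, where it overlaps with the paper's own argument, it follows the same route: recurrent expansivity gives positive expansivity immediately from the definitions, and injectivity comes from the observation that $f(x)=f(y)$ forces $\limsup_{n\rightarrow\infty}d(f^{n}(x),f^{n}(y))=0$, which is incompatible with recurrent expansivity for a distinct pair. The differences are in coverage rather than in method. You supply the Ces\`aro estimate $\limsup_{n\rightarrow\infty}\frac{1}{n}\sum_{i=0}^{n-1}a_{i}\leq\limsup_{n\rightarrow\infty}a_{n}$ to prove the first implication (mean expansive $\Rightarrow$ recurrently expansive), a step the paper's proof passes over in silence, and you make explicit the appeal to the fact recalled from \cite{CKE} for the final consequence. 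Conversely, the paper devotes the bulk of its proof to the reverse implication --- that a positively expansive injective map generates a recurrently expansive autonomous system --- which is not part of the stated proposition and which you rightly omit; your version therefore proves exactly what is asserted, while the paper's proves an unstated equivalence. Your closing remark about a single expansivity constant is also accurate: the Ces\`aro bound preserves the constant $\mathfrak{c}$, and the second implication uses the same $\mathfrak{c}$, so no re-choosing is needed.
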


\begin{proof} 
Clearly, if $F$ is recurrently expansive with expansive constant $\mathfrak{c}$, then $f$ is positively expansive with expansive constant $\mathfrak{c}$. Now, if $f(x) = f(y)$, then $f^{n}(x) = f^{n}(y)$ for all $n\in \mathbb{N}^{+}$ which implies that $\limsup\limits_{n\rightarrow \infty} d(f^{n}(x), f^{n}(y)) = 0$. Therefore, we must have $x=y$. 
\medskip

Conversely, suppose that $f$ is positively expansive injective map with expansive constant $\mathfrak{c}$. If $x\neq y$ in $X$, then by positive expansivity there exists $n_{1}\in \mathbb{N}$ such that $d(f^{n_{1}}(x), f^{n_{1}}(y))>\mathfrak{c}$. By injectivity and positive expansivity, we can choose $n_{2}\in \mathbb{N}^+$ such that $d(f^{n_{2}+n_{1}+1}(x), f^{n_{2}+n_{1}+1}(y)) > \mathfrak{c}$. Continuing in this way, we can choose a strictly increasing sequence $\lbrace m_{i}\rbrace_{i=1}^{\infty}$ such that $d(f^{m_{i}}(x), f^{m_{i}}(y))>\mathfrak{c}$ for all $i\in \mathbb{N}^{+}$. Therefore, we have $\limsup\limits_{n\rightarrow \infty}d(f^{n}(x), f^{n}(y))>\mathfrak{c}$ and hence, the result.   
\end{proof} 

\begin{Proposition}
If $F\in N(X)$ and $G\in N(Y)$, then $F$ and $G$ are recurrently expansive (resp. mean expansive) if and only if $F\times G$ is recurrently expansive (resp. mean expansive).
\label{P3.7} 
\end{Proposition}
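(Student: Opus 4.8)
The plan is to reduce everything to the elementary inequalities $\max\{a,b\}\ge a$ and $\max\{a,b\}\ge b$ for $a,b\ge 0$ (the same device used for Proposition \ref{P3.2}(a)) together with the identity $(F\times G)_n = F_n\times G_n$, which gives, for any $(x_1,y_1),(x_2,y_2)\in X\times Y$ and any $n$,
\[
q\big((F\times G)_n(x_1,y_1),(F\times G)_n(x_2,y_2)\big)=\max\{d(F_n(x_1),F_n(x_2)),\,p(G_n(y_1),G_n(y_2))\}.
\]

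For the forward implication in the recurrently expansive case, I would take expansive constants $\mathfrak{c}_F,\mathfrak{c}_G\in(0,1)$ for $F$ and $G$, put $\mathfrak{c}=\min\{\mathfrak{c}_F,\mathfrak{c}_G\}\in(0,1)$, and observe that a distinct pair in $X\times Y$ differs in at least one coordinate, say the first, $x_1\ne x_2$. Since the displayed max dominates $d(F_n(x_1),F_n(x_2))$ for every $n$, monotonicity of $\limsup$ yields $\limsup_n q(\cdots)\ge\limsup_n d(F_n(x_1),F_n(x_2))>\mathfrak{c}_F\ge\mathfrak{c}$, so $F\times G$ is recurrently expansive. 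The mean expansive case is verbatim the same after replacing each distance $d(F_n(x_1),F_n(x_2))$ by its Cesàro average $\frac1n\sum_{i=0}^{n-1}d(F_i(x_1),F_i(x_2))$ and using that averaging and $\limsup$ are both monotone.

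For the converse I would assume $F\times G$ is recurrently expansive (resp. mean expansive) with constant $\mathfrak{c}$, fix a point $y\in Y$, and feed the pair $(x_1,y),(x_2,y)$ — distinct whenever $x_1\ne x_2$ — into the hypothesis on $F\times G$. Since the second coordinates coincide, $p(G_n(y),G_n(y))=0$ for all $n$, so the max (resp. the averaged max) collapses exactly to $d(F_n(x_1),F_n(x_2))$ (resp. to $\frac1n\sum_{i=0}^{n-1}d(F_i(x_1),F_i(x_2))$), and the inequality guaranteed for $F\times G$ is precisely the one required for $F$, with the same constant $\mathfrak{c}$. Fixing instead a point $x\in X$ and running the symmetric argument gives the statement for $G$.

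No step is a genuine obstacle; the only two points that deserve a line of care are that the forward direction must use $\min\{\mathfrak{c}_F,\mathfrak{c}_G\}$ rather than $\max$, so that the strict inequality survives no matter which coordinate witnesses the distinctness of the chosen pair, and that the converse tacitly uses nonemptiness of the factor spaces to produce the test points $(x_i,y)$ (in the degenerate empty case the equivalence holds vacuously).
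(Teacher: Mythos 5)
Your proof is correct. The paper states Proposition \ref{P3.7} without any proof (it is treated as routine, in the spirit of the hint given for Proposition \ref{P3.2}(a)), and your argument --- the inequalities $d,p\le\max\{d,p\}$ in the product metric together with the identity $(F\times G)_n=F_n\times G_n$ for the forward direction, and test pairs with one coordinate frozen for the converse --- is exactly the standard argument the authors leave to the reader, with the right points of care noted (taking $\min\{\mathfrak{c}_F,\mathfrak{c}_G\}$ as the product's expansive constant, and nonemptiness of the factors when choosing the auxiliary point).
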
 

\begin{Proposition}
If $F\in N(X)$ is equicontinuous, then $F$ is recurrently expansive if and only if $F^{k}$ is recurrently expansive for all $k\in \mathbb{N}^{+}$.
\label{P3.8} 
\end{Proposition}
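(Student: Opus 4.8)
The plan is to reduce the whole statement to a subsampling fact, using the elementary identity $(F^{k})_{n}=F_{nk}$ for every $n\in\mathbb{N}$ (where $(F^{k})_{n}$ is the $n$-fold composition attached to the system $F^{k}$). This is immediate from the definitions: the $i$-th map of $F^{k}$ is $g_{i}=F_{[(i-1)k+1,\,ik]}$, so $(F^{k})_{n}=g_{n}\circ\cdots\circ g_{1}\circ g_{0}$ telescopes to $f_{nk}\circ\cdots\circ f_{1}=F_{nk}$. Hence the $F^{k}$-orbit of a point is exactly the subsequence $\{F_{nk}\}_{n}$ of its $F$-orbit. With this in hand, the implication ``$F^{k}$ recurrently expansive for all $k\in\mathbb{N}^{+}$ $\Rightarrow$ $F$ recurrently expansive'' is trivial, since $F^{1}=F$; more robustly, for any fixed $k$, if $F^{k}$ has recurrent expansive constant $\mathfrak{c}$ then, because $\{d(F_{nk}(x),F_{nk}(y))\}_{n}$ is a subsequence of $\{d(F_{m}(x),F_{m}(y))\}_{m}$, one gets $\limsup_{m}d(F_{m}(x),F_{m}(y))\ge\limsup_{n}d(F_{nk}(x),F_{nk}(y))>\mathfrak{c}$ for all distinct $x,y$, so $F$ is recurrently expansive with the same constant. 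This direction needs no hypothesis on $F$.

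The content is the forward implication ``$F$ recurrently expansive $\Rightarrow$ $F^{k}$ recurrently expansive'', and here equicontinuity is used. First I would record the routine lemma that if $\{f_{i}\}_{i\in\mathbb{N}^{+}}$ is equicontinuous then so is the family $\mathcal{C}_{k}$ consisting of the identity together with all compositions $F_{[a,b]}$ of at most $k-1$ of the maps $f_{i}$; this is proved by induction on the block length, feeding the common modulus of $\{f_{i}\}$ into itself. Fixing the recurrent expansive constant $\mathfrak{c}\in(0,1)$ of $F$ and applying equicontinuity of $\mathcal{C}_{k}$ with $\epsilon=\mathfrak{c}$ produces $\mathfrak{c}'\in(0,1)$, depending only on $\mathfrak{c}$, $k$ and the equicontinuity modulus of $F$, such that $d(a,b)<\mathfrak{c}'$ implies $d(\phi(a),\phi(b))<\mathfrak{c}$ for every $\phi\in\mathcal{C}_{k}$; equivalently, $d(\phi(a),\phi(b))\ge\mathfrak{c}$ forces $d(a,b)\ge\mathfrak{c}'$.

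Now, given distinct $x,y$, recurrent expansivity of $F$ yields infinitely many $n$ with $d(F_{n}(x),F_{n}(y))>\mathfrak{c}$. For each such $n$ set $\ell=\lfloor n/k\rfloor$ and write $F_{n}=F_{[\ell k+1,\,n]}\circ F_{\ell k}$, where $F_{[\ell k+1,\,n]}\in\mathcal{C}_{k}$ (a composition of $n-\ell k\le k-1$ maps, interpreted as the identity when $n=\ell k$). Taking $\phi=F_{[\ell k+1,\,n]}$, $a=F_{\ell k}(x)$, $b=F_{\ell k}(y)$ in the contrapositive above gives $d\big((F^{k})_{\ell}(x),(F^{k})_{\ell}(y)\big)=d\big(F_{\ell k}(x),F_{\ell k}(y)\big)\ge\mathfrak{c}'$. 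Since $n\mapsto\lfloor n/k\rfloor$ is finite-to-one, these $\ell$ are infinitely many, so $\limsup_{\ell}d\big((F^{k})_{\ell}(x),(F^{k})_{\ell}(y)\big)\ge\mathfrak{c}'>\mathfrak{c}'/2$, and $F^{k}$ is recurrently expansive with constant $\mathfrak{c}'/2\in(0,1)$, uniformly in $x,y$.

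The main obstacle I anticipate is exactly this forward direction. The naive move of pushing the orbit forward to a multiple of $k$ fails, because equicontinuity provides no lower bound on distances under forward iteration; one must instead peel back to the largest preceding multiple of $k$ and invoke the contrapositive of (uniform) equicontinuity of the bounded-length compositions $\mathcal{C}_{k}$, which is what makes the extracted constant $\mathfrak{c}'$ independent of the pair $x,y$. The remaining bookkeeping — the choice $\ell=\lfloor n/k\rfloor$ and the fact that only finitely many $n$ map to each $\ell$, so that ``infinitely many'' is preserved — is elementary but should be stated carefully.
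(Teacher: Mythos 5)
Your proof is correct and is essentially the argument the paper intends: the paper gives no details, only pointing to Theorem 2.2 of Thakkar--Das, and your scheme --- the identity $(F^{k})_{n}=F_{nk}$, uniform equicontinuity of the family of compositions of at most $k-1$ consecutive maps, and the decomposition $F_{n}=F_{[\ell k+1,\,n]}\circ F_{\ell k}$ to peel back to the nearest multiple of $k$ --- is exactly that argument, correctly adapted to the $\limsup$ setting via the finite-to-one bookkeeping on $n\mapsto\ell$. The subsequence remark for the converse direction is also fine, so there is nothing to add.
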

\begin{proof}
Proof is similar to the proof of Theorem 2.2 \cite{TDT}
\end{proof} 

\begin{Example} 
Let $X =\prod_{i\in \mathbb{Z}} X_{i}$ be equipped with the metric $d(x,y) = \sum_{i=-\infty}^{\infty}\frac{|x_{i} - y_{i}|}{2^{|i|}}$, where $X_{i} = \lbrace 0, 1\rbrace$. Let $f : X\rightarrow X$ be given by $f(x) = y$, where $y_{i} = x_{i+1}$ for all $i\in \mathbb{Z}$. Then, $F = \lbrace f, f^{-1}, f^{-2}, f^{2}, f, f^{-1}, f^{-2}, f^{2}, f^{3}, f^{-3}, f^{-4}, f^{4}, f, f^{-1}, f^{-2}, f^{2},$ $f^{3}, f^{-3}, f^{-4}, f^{4}, f^{5}, f^{-5}, f^{-6}, f^{6}, f^{7}, f^{-7}, f^{-8}, f^{8}, . . . \rbrace$ is recurrently expansive with expansive constant $0<\alpha <\frac{1}{2}$ but for each $i\in\mathbb{N}^+$, $F^{2i}$ is not recurrently expansive. Thus, we conclude that Proposition \ref{P3.8} is not be true if NAS is not equicontinuous.  
\label{E3.9}
\end{Example} 

\begin{Proposition}
Let $F\in N(X)$. If for some $k\in\mathbb{N}^+$, $F^{k}$ is mean expansive, then $F$ is mean expansive.  
\label{P3.10}
\end{Proposition}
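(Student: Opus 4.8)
The plan is to relate the Cesàro average over all times that defines mean expansivity of $F$ to the Cesàro average along the times $\{ik\}_{i\ge 0}$, which is (after reading off the iterate correctly) exactly what mean expansivity of $F^{k}$ supplies; the passage from the sparse average to the full average is then a one‑line estimate using nonnegativity of the distance together with the fact that a subsequence cannot increase a limit superior.

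First I would make the elementary but essential identification of the full‑time maps of the iterate. Writing $F^{k}=\{g_i\}_{i\in\mathbb{N}^{+}}$ with $g_i=F_{[(i-1)k+1,\,ik]}$, one checks directly from the definitions that $(F^{k})_n=g_n\circ\cdots\circ g_1=f_{nk}\circ\cdots\circ f_1=F_{nk}$ for every $n\in\mathbb{N}$ (and $(F^{k})_0=F_0=\mathrm{id}$). Consequently, if $\mathfrak{c}\in(0,1)$ is a mean expansive constant for $F^{k}$, then every pair of distinct points $x,y\in X$ satisfies
$$\limsup_{n\to\infty}\ \frac1n\sum_{i=0}^{n-1}d\bigl(F_{ik}(x),F_{ik}(y)\bigr)>\mathfrak{c}.$$

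Next, I would fix a distinct pair $x,y\in X$, set $a_i=d(F_i(x),F_i(y))\ge 0$, and exploit nonnegativity of the summands: for each $n\in\mathbb{N}^{+}$,
$$\frac{1}{nk}\sum_{i=0}^{nk-1}a_i=\frac{1}{nk}\sum_{j=0}^{n-1}\sum_{r=0}^{k-1}a_{jk+r}\ \ge\ \frac{1}{nk}\sum_{j=0}^{n-1}a_{jk}=\frac1k\cdot\frac1n\sum_{j=0}^{n-1}a_{jk}.$$
Taking $\limsup_{n\to\infty}$ and invoking the previous display gives $\limsup_{n}\frac{1}{nk}\sum_{i=0}^{nk-1}a_i>\mathfrak{c}/k$. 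Since $\bigl(\tfrac{1}{nk}\sum_{i=0}^{nk-1}a_i\bigr)_n$ is a subsequence of $\bigl(\tfrac1N\sum_{i=0}^{N-1}a_i\bigr)_N$, the limit superior of the full sequence is at least as large, so $\limsup_{N\to\infty}\tfrac1N\sum_{i=0}^{N-1}d(F_i(x),F_i(y))>\mathfrak{c}/k$. As $x,y$ were an arbitrary distinct pair and $0<\mathfrak{c}/k<1$, this exhibits $F$ as mean expansive with constant $\mathfrak{c}/k$.

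There is no deep obstacle here: the whole argument is a bookkeeping estimate for averages of a nonnegative sequence. The one place that needs care is the first step — extracting $(F^{k})_n=F_{nk}$ correctly from the index‑heavy definition of the iterate of a NAS — together with the routine remark that restricting to a subsequence cannot increase a limit superior, which is exactly what lets one pass from averages along multiples of $k$ to averages along all positive integers. Note that no equicontinuity, compactness, or boundedness of $d$ is used, in contrast to the corresponding statement for recurrent expansivity (Proposition \ref{P3.8}).
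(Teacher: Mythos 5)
Your proof is correct and follows essentially the same route as the paper: identify $(F^{k})_{n}=F_{nk}$, bound the sparse average along multiples of $k$ by $k$ times the full average using nonnegativity, and deduce mean expansivity of $F$ with constant $\mathfrak{c}/k$. The only difference is that you make explicit the final step (a limit superior along the subsequence $N=nk$ cannot exceed the full limit superior), which the paper leaves implicit.
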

\begin{proof}
Suppose that $F^{k}$ is mean expansive with expansive constant $\mathfrak{c}$. For $x, y\in X$ and $n\in \mathbb{N}^{+}$, we have 
$\frac{1}{n}\sum_{i=0}^{n-1}d((F^{k})_{i}(x), (F^{k})_{i}(y)) \leq  k\frac{1}{nk}\sum_{i=0}^{(n-1)k}d(F_{i}(x), F_{i}(y))\leq k \frac{1}{nk}\sum_{i=0}^{nk-1}d(F_{i}(x), F_{i}(y))$. From this we conclude that $F$ is mean expansive with expansive constant $\frac{\mathfrak{c}}{k}$.
\end{proof} 

We now give an example showing that the converse of the above result is not true. We further give sufficient condition under which the converse holds.

\begin{Example}
Let $f:\mathbb{R}\rightarrow \mathbb{R}$ is given by $f(x) = 2x$. Then, $F = \lbrace f, f^{-1}, f^{2}, f^{-2}, f^{3},
\\
 f^{-3}, . . .\rbrace$ is mean expansive but $F^{2i}$ is not mean expansive for each $i\in \mathbb{N}^{+}$. 
\label{E3.12}
\end{Example}   

\begin{Proposition}
Let $F\in N_{m}(X)$ be ME. If $F$ is mean expansive, then for each $k\in\mathbb{N}^+$, $F^{k}$ is mean expansive.  
\end{Proposition}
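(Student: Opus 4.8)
The plan is to argue by contradiction in a quantitative way: fixing an expansive constant $\mathfrak{c}$ that witnesses mean expansivity of $F$, I will manufacture from $\mathfrak{c}$, the period $m$, the exponent $k$ and the modulus of mean equicontinuity a constant $\mathfrak{c}'\in(0,1)$ that witnesses mean expansivity of $F^{k}$. Since $F^{k}$ telescopes, $(F^{k})_{j}=F_{jk}$ for every $j$, so mean expansivity of $F^{k}$ is exactly the statement that $\limsup_{N}\frac1N\sum_{j=0}^{N-1}d(F_{jk}(x),F_{jk}(y))>\mathfrak{c}'$ for all distinct $x,y$, i.e. a lower bound on the averages of $d(F_{i}(x),F_{i}(y))$ sampled along the multiples of $k$, whereas the hypothesis controls these averages along all of $\mathbb{N}$. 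I therefore fix distinct $x,y$, assume $\limsup_{N}\frac1N\sum_{j<N}d(F_{jk}(x),F_{jk}(y))\le\mathfrak{c}'$, and try to deduce $\limsup_{n}\frac1n\sum_{i<n}d(F_{i}(x),F_{i}(y))\le\mathfrak{c}/2$, which contradicts mean expansivity of $F$. This is the converse of Proposition \ref{P3.10} under the extra hypotheses, as announced before Example \ref{E3.12}.

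The bridge between the full average and the sampled average is a decomposition of each orbit index as $i=jk+r$ with $0\le r<k$: then $F_{jk+r}(x)=F_{[jk+1,jk+r]}(F_{jk}(x))$, and by periodicity the composite map $F_{[jk+1,jk+r]}$ depends only on $jk\bmod m$ and on $r$, so it takes only finitely many values, each of which is a composition of at most $k-1$ of the maps $f_{i}$. Splitting the values of the index $j$ according to the residue class modulo $m$, within each class a single such composite map is applied to all the sampled pairs $(F_{jk}(x),F_{jk}(y))$. This is where periodicity enters decisively: without it one cannot cut the index set into finitely many subsequences along each of which one fixed map acts.

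Next I feed this into mean equicontinuity. For the target accuracy $\epsilon=\mathfrak{c}/2$, mean equicontinuity gives a $\delta$ controlling the average distortion under one map $f_{i}$; iterating this at most $k-1$ times produces a single $\delta^{*}=\delta^{*}(\epsilon,k)>0$ such that any sequence of pairs with average distance below $\delta^{*}$ still has average distance below $\epsilon$ after one applies any composition of at most $k-1$ of the $f_{i}$. On the other hand, the average of $d(F_{jk}(x),F_{jk}(y))$ over a single residue class modulo $m$ and over $j<N$ is, for $N$ large, at most about $m$ times the global average, which by assumption is eventually below $\mathfrak{c}'+\eta$; choosing $\mathfrak{c}'$, and then $\eta$, small enough that $m(\mathfrak{c}'+\eta)<\delta^{*}$ places each residue-class average below the threshold $\delta^{*}$. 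Mean equicontinuity then bounds, for each of the finitely many composite maps and each residue class, the resulting average distance by $\epsilon$; summing the $mk$ such contributions and dividing by the segment length $Nk$ gives $\frac1{Nk}\sum_{i<Nk}d(F_{i}(x),F_{i}(y))<\epsilon+o(1)$. Comparing an arbitrary $n$ with the nearest multiple of $k$, a routine estimate using only that the summands are nonnegative, yields $\limsup_{n}\frac1n\sum_{i<n}d(F_{i}(x),F_{i}(y))\le\epsilon<\mathfrak{c}$, the contradiction we wanted; hence $F^{k}$ is mean expansive with constant $\mathfrak{c}'$.

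The step I expect to be the crux is exactly the one that rules out mere equicontinuity: uniform (or mean) continuity controls distortion only when the input distances are small, so if $X$ is unbounded there could in principle be finitely many sampled pairs $(F_{jk}(x),F_{jk}(y))$ lying at huge distance whose images under the composite maps spread out enough to inflate the full average even though the sampled average is tiny. What saves the argument is that mean equicontinuity is an averaged hypothesis: once the residue-class average of the sampled distances is below $\delta^{*}$, those few large pairs are automatically negligible within that average, and so are their images. Thus the proof genuinely needs both periodicity, to reduce to finitely many composite maps acting coherently along subsequences, and the averaged form of equicontinuity; the remaining work, namely iterating the mean-equicontinuity modulus $k-1$ times, comparing the residue-class average with the global average, and replacing $n$ by the nearest multiple of $k$, is bookkeeping.
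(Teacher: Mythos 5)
Your proof is correct and follows essentially the same route as the paper's: argue the contrapositive, use periodicity to reduce the connecting maps between sampled points to a finite family of composites, control their averaged distortion by iterating mean equicontinuity, and compare averages along residue classes with the full Ces\`aro average to contradict mean expansivity of $F$. The only difference is bookkeeping: you split indices as $i=jk+r$ and then $j$ modulo $m$ (composites of length at most $k-1$ applied to the $k$-sampled points), while the paper splits directly modulo $mk$ and applies the composites $F_{j}$, $0\le j\le mk-1$, to the $mk$-sampled points; the two decompositions coincide as partitions into arithmetic progressions of step $mk$.
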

 
\begin{proof} 
Suppose that $F\in N_{m}(X)$ is mean expansive with expansive constant $\mathfrak{c}$. Fix $k\in \mathbb{N}^{+}$ and choose $\mathfrak{d} > 0$ such that for every pair of sequences $\lbrace x_{i}\rbrace_{i=0}^{\infty}$ and $\lbrace y_{i}\rbrace_{i=0}^{\infty}$, $\limsup\limits_{n\rightarrow \infty}\frac{1}{n}\sum_{i=0}^{n-1}d(x_{i}, y_{i}) < m\mathfrak{d}$ implies that $\limsup\limits_{n\rightarrow \infty}\frac{1}{n}\sum_{i=0}^{n-1}d(F_{j}(x_{i}), F_{j}(y_{i})) < \frac{\mathfrak{c}}{mk}$ for all $0\leq j\leq (mk - 1)$. 

If for $x,y\in X$, $\limsup\limits_{n\rightarrow \infty}\frac{1}{n}\sum_{i=0}^{n-1}d((F^{k})_{i}(x), (F^{k})_{i}(y)) < \mathfrak{d}$, then we must have $\limsup\limits_{n\rightarrow \infty}\frac{1}{n}\sum_{i=0}^{n-1}d(F_{mki}(x), F_{mki}(y)) < m\mathfrak{d}$. 

By ME, $\limsup\limits_{n\rightarrow \infty}\frac{1}{n}\sum_{i=0}^{n-1}d((F_{mki+j})(x), (F_{mki+j})(y)) < \frac{\mathfrak{c}}{mk}$ for all $0\leq j\leq (mk - 1)$ and hence 
\begin{align*}
\limsup\limits_{n\rightarrow \infty}\frac{1}{n}\sum_{i=0}^{n-1}d(F_{i}(x), F_{i}(y)) &\leq \limsup\limits_{n\rightarrow \infty}\frac{1}{n}\sum_{i=0}^{nmk-1}d(F_{i}(x), F_{i}(y)) \\
&\leq \sum_{j=0}^{mk-1}\limsup\limits_{n\rightarrow \infty}\frac{1}{n}\sum_{i=0}^{n-1}d(F_{mki+j}(x), F_{mki+j}(y)) \\
&< \mathfrak{c}
\end{align*}
By mean expansivity of $F$, we must have $x=y$. Hence, $F^{k}$ is mean expansive with expansive constant $\mathfrak{d}$. 
\end{proof}   

Let $\gamma=\lbrace x_{n}\rbrace_{n\in \mathbb{N}}$ be a sequence of elements of $X$. $\gamma$ is said to be $\delta$-pseudo orbit of $F$, if $d(f_{i+1}(x_{i}), x_{i+1}) < \delta$ for all $i\in \mathbb{N}$. $\gamma$ is said to be $\delta$-average-pseudo orbit of $f$ if there exists $N_{\delta}\in \mathbb{N}^{+}$ such that $\frac{1}{n}\sum_{i=0}^{n-1}d(f_{i+k+1}(x_{i+k}), x_{i+k+1}) < \delta$ for all $n\geq N_{\delta}$ and $k\in\mathbb{N}$. $\gamma$ is said to be $\epsilon$-shadowed by some $z\in X$, if $d(F_{n}(z), x_{n}) < \epsilon$ for all $n\in \mathbb{N}$. $\gamma$ is said to be $\epsilon$-shadowed in average by some $z\in X$, if $\limsup\limits_{n\rightarrow \infty} \frac{1}{n}\sum_{i=0}^{n-1}d(F_{i}(z), x_{i})$ $< \epsilon$. $\gamma$ is said to be almost $\epsilon$-shadowed by some $z\in X$ if $d(z, x_{0})< \epsilon$ and $\limsup\limits_{n\rightarrow \infty}d(F_{n}(z), x_{n}) < \epsilon$. $\gamma$ is said to be strongly $\epsilon$-shadowed in average if it is $\epsilon$-shadowed in average by some $z\in X$ such that $d(z, x_{0}) < \epsilon$. 
\medskip

$F\in N(X)$ is said to have shadowing property \cite{TDT} if for every $\epsilon>0$ there exists $\delta>0$ such that every $\delta$-pseudo orbit of $F$ can be $\epsilon$-shadowed by some point in $X$. 
\medskip

\begin{Definition}
(i) $F\in N(X)$ is said to have almost shadowing property (ALSP) if for every $\epsilon > 0$ there exists $\delta > 0$ such that every $\delta$-pseudo orbit of $F$ can be almost $\epsilon$-shadowed by some point in $X$.   
\\
(ii) $F\in N(X)$ is said to have strong average shadowing property (SASP) if for every $\epsilon > 0$ there exists $\delta > 0$ such that every $\delta$-average pseudo orbit of $F$ can be strongly $\epsilon$-shadowed in average by some point in $X$.  
\label{D2.7}
\end{Definition}

\begin{Proposition}
If $F\in N(X)$ is equicontinuous, then $F$ has ALSP if and only if $F^{k}$ has ALSP, for each $k>1$.  
\label{P3.13}
\end{Proposition}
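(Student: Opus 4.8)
The plan is to establish the two implications separately and to note that equicontinuity is needed only for one of them. Throughout I use that the generating maps of $F^k$ are $h_i:=F_{[(i-1)k+1,\,ik]}$, so that $(F^k)_n=F_{nk}$ for every $n\in\mathbb{N}$, and that $\{y_i\}$ is a $\delta$-pseudo orbit of $F^k$ precisely when $d(F_{[ik+1,\,(i+1)k]}(y_i),y_{i+1})<\delta$ for all $i$.

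For ``$F$ has ALSP $\Rightarrow$ $F^k$ has ALSP'' (no equicontinuity needed) I would fix $\epsilon>0$, take the $\delta>0$ from ALSP of $F$, and, given a $\delta$-pseudo orbit $\{y_i\}_{i\in\mathbb{N}}$ of $F^k$, interpolate it to $\{x_n\}_{n\in\mathbb{N}}$ defined by $x_{ik+j}=F_{[ik+1,\,ik+j]}(y_i)$ for $0\le j\le k-1$ (so $x_{ik}=y_i$). Every within-block transition of $\{x_n\}$ is exact, and the transition from $x_{(i+1)k-1}$ to $x_{(i+1)k}$ reproduces $d(F_{[ik+1,\,(i+1)k]}(y_i),y_{i+1})<\delta$; hence $\{x_n\}$ is a $\delta$-pseudo orbit of $F$. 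ALSP of $F$ yields $z\in X$ with $d(z,x_0)<\epsilon$ and $\limsup_n d(F_n(z),x_n)<\epsilon$, and restricting to $n=ik$ (recall $x_0=y_0$ and $(F^k)_i=F_{ik}$) shows $z$ almost $\epsilon$-shadows $\{y_i\}$ for $F^k$.

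For the converse, assume $F^k$ has ALSP and $F$ is equicontinuous, and fix $\epsilon>0$. First, by equicontinuity, I would choose $\epsilon'\in(0,\epsilon/3]$ with $d(a,b)<\epsilon'\Rightarrow d(F_{[j+1,\,j+m]}(a),F_{[j+1,\,j+m]}(b))<\epsilon/3$ for all $j\ge0$ and $0\le m\le k-1$ (only finitely many compositions of members of the equicontinuous family $\{f_i\}$ are involved). Let $\delta_*>0$ be given by ALSP of $F^k$ for precision $\epsilon'$, set $\delta'=\min\{\delta_*,\epsilon/3\}$, and, again by equicontinuity, pick $0<\delta=\delta_0\le\delta_1\le\cdots\le\delta_k=\delta'$ with $2\delta_{m-1}\le\delta_m$ and $d(a,b)<\delta_{m-1}\Rightarrow d(f_i(a),f_i(b))<\delta_m/2$ for all $i$. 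Given a $\delta$-pseudo orbit $\{x_n\}$ of $F$, induction on $m$ (propagating the one-step error $\delta$ through the modulus $k$ times) gives $d(F_{[ik+1,\,ik+m]}(x_{ik}),x_{ik+m})<\delta_m\le\delta'$ for $1\le m\le k$, so $\{x_{ik}\}_i$ is a $\delta'$-pseudo orbit, hence a $\delta_*$-pseudo orbit, of $F^k$. ALSP of $F^k$ then produces $z\in X$ with $d(z,x_0)<\epsilon'\le\epsilon$ and $\limsup_i d(F_{ik}(z),x_{ik})<\epsilon'$. Writing $n=ik+m$ with $0\le m\le k-1$ and splitting
\[
d(F_n(z),x_n)\le d\big(F_{[ik+1,\,ik+m]}(F_{ik}(z)),F_{[ik+1,\,ik+m]}(x_{ik})\big)+d\big(F_{[ik+1,\,ik+m]}(x_{ik}),x_{ik+m}\big),
\]
for all large $n$ the first term is $<\epsilon/3$ (by the choice of $\epsilon'$, since $d(F_{ik}(z),x_{ik})$ is eventually $<\epsilon'$) and the second is $<\delta'\le\epsilon/3$; together with the $m=0$ case ($d(F_{ik}(z),x_{ik})<\epsilon'\le\epsilon/3$) this gives $\limsup_n d(F_n(z),x_n)\le 2\epsilon/3<\epsilon$, so $F$ has ALSP.

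The two interpolation/restriction steps are routine. The part I expect to be the main obstacle is the bookkeeping of constants in the correct order in the converse direction: the shadowing precision $\epsilon'$ and the $F^k$-pseudo-orbit constant $\delta'$ must be fixed before the one-step constant $\delta$ of $F$; the equicontinuity modulus must be invoked $k$ times to turn an $F$-pseudo orbit into an $F^k$-pseudo orbit and once more (with up to $k-1$ compositions) to spread the shadowing from the subsequence $\{ik\}$ back to every index; and working with thirds rather than halves is what lets the strict inequality survive when passing to the $\limsup$.
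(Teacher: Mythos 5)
Your argument is correct and is essentially the argument the paper has in mind: the paper's proof is only a pointer to the analogous iterate results (Theorems 3.3 and 3.5 of the Thakkar--Das reference), and your interpolation of an $F^k$-pseudo orbit into an exact-within-blocks $F$-pseudo orbit, together with the equicontinuity-driven propagation of the one-step error and the spreading of the shadowing from the subsequence $\{ik\}$ back to all indices, is precisely that standard argument adapted to the $\limsup$/almost-shadowing setting. Your observation that equicontinuity is only needed in the direction ``$F^k$ has ALSP $\Rightarrow$ $F$ has ALSP'' is also accurate.
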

\begin{proof}
One can prove similarly as the proof of Theorem 3.3 and Theorem 3.5 in \cite{TDT}. 
\end{proof}

\begin{Proposition}
Let $(X,d)$ and $(Y,p)$ be metric spaces and $F\in N(X)$, $G\in N(Y)$. Then, $F$ and $G$ has SASP (ALSP) if and only if $F\times G$ has SASP (ALSP). 
\label{P3.14} 
\end{Proposition}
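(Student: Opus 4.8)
The plan is to reduce everything to two elementary facts about the product metric $q=\max\{d,p\}$ that were already invoked for Proposition~\ref{P3.2}, namely $d,p\le q\le d+p$ on the relevant pairs, together with the identity $(F\times G)_n=F_n\times G_n$ for all $n\in\mathbb{N}$ (immediate from $f_0,g_0$ being identities and the componentwise definition of $f_i\times g_i$) and the subadditivity $\limsup_n(a_n+b_n)\le\limsup_n a_n+\limsup_n b_n$. I would treat the SASP and ALSP statements in parallel, in each case proving the two implications separately.

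\emph{From $F\times G$ to the factors.} Given $\epsilon>0$, let $\delta>0$ be the constant that SASP of $F\times G$ provides for $\epsilon$. If $\{x_i\}_{i\ge 0}$ is a $\delta$-average pseudo orbit of $F$ with constant $N_\delta$, fix any $y_0\in Y$ and set $y_i=G_i(y_0)$; then $\{y_i\}$ is a genuine orbit of $G$, so every averaged error of $\{(x_i,y_i)\}$ equals $\frac1n\sum_i d(f_{i+k+1}(x_{i+k}),x_{i+k+1})$ because $q=\max\{d,p\}$ and the second coordinate contributes $0$, whence $\{(x_i,y_i)\}$ is a $\delta$-average pseudo orbit of $F\times G$ with the same $N_\delta$. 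If $(z,w)$ strongly $\epsilon$-shadows it in average, then from $d\le q$ we get $d(z,x_0)\le q((z,w),(x_0,y_0))<\epsilon$ and $\limsup_n\frac1n\sum_i d(F_i(z),x_i)\le\limsup_n\frac1n\sum_i q((F\times G)_i(z,w),(x_i,y_i))<\epsilon$, so $z$ strongly $\epsilon$-shadows $\{x_i\}$ in average; by symmetry $G$ has SASP as well, and no halving of $\epsilon$ is needed here. The ALSP case is the same with $\delta$-pseudo orbits and $\limsup_n d(F_n(\cdot),\cdot)$ replacing the Ces\`aro averages.

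\emph{From the factors to $F\times G$.} Given $\epsilon>0$, apply SASP of $F$ and of $G$ with $\epsilon/2$ to obtain $\delta_1,\delta_2>0$, and put $\delta=\min\{\delta_1,\delta_2\}$. A $\delta$-average pseudo orbit $\{(x_i,y_i)\}$ of $F\times G$ with constant $N_\delta$ projects, via $d,p\le q$, to a $\delta_1$-average pseudo orbit $\{x_i\}$ of $F$ and a $\delta_2$-average pseudo orbit $\{y_i\}$ of $G$, both with the same $N_\delta$. Picking $z,w$ that strongly $\epsilon/2$-shadow these in average, we get $q((z,w),(x_0,y_0))=\max\{d(z,x_0),p(w,y_0)\}<\epsilon/2$, while $q\le d+p$ and subadditivity of $\limsup$ give $\limsup_n\frac1n\sum_i q((F\times G)_i(z,w),(x_i,y_i))\le\limsup_n\frac1n\sum_i d(F_i(z),x_i)+\limsup_n\frac1n\sum_i p(G_i(w),y_i)<\epsilon$, so $(z,w)$ strongly $\epsilon$-shadows $\{(x_i,y_i)\}$ in average. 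For ALSP one argues identically: $\delta$-pseudo orbits of $F$ and of $G$ assemble into a $\delta$-pseudo orbit of $F\times G$ because $q=\max\{d,p\}$, and since $\limsup_n\max\{a_n,b_n\}=\max\{\limsup_n a_n,\limsup_n b_n\}$ one may even keep the original $\epsilon$ throughout.

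I do not anticipate a genuine obstacle; the argument is bookkeeping driven by the two-sided bound $d,p\le q\le d+p$. The points deserving attention are that the constant $N_\delta$ from the definition of an average pseudo orbit must be carried through both the orbit-padding construction and the coordinate projections, that one must work with $\limsup$ (hence subadditivity, not additivity) in the SASP case, and that the harmless passage to $\epsilon/2$ is precisely what absorbs the two summands coming from $q\le d+p$; the construction also tacitly uses that $X$ and $Y$ are nonempty when padding with an orbit.
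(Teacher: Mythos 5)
Your proposal is correct and follows essentially the same route as the paper: project a $\delta$-average (resp. $\delta$-) pseudo orbit of $F\times G$ to the factors using $d,p\le q$, pad a pseudo orbit of $F$ with a genuine orbit of $G$ for the converse, and combine the two shadowing points via $q\le d+p$ with $\epsilon/2$. Your observations that no halving is needed in the product-to-factor direction and that the $\max$-identity lets you keep $\epsilon$ in the ALSP case are minor refinements of the paper's argument, not a different approach.
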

\begin{proof}
Suppose that $F$ and $G$ has SASP. Let $\epsilon > 0$ and $\delta > 0$ be given for $\frac{\epsilon}{2}$ by strong average shadowing property of $F$ and $G$. Let $\lambda = \lbrace (x_{n}, y_{n})\rbrace_{n=0}^{\infty}$ be a $\delta$-average pseudo orbit of $F\times G$. Since $d(f_{n+1}(x_{n}), x_{n+1})\leq q((f_{n+1}\times g_{n+1})(x_{n}, y_{n}), (x_{n+1}, y_{n+1}))$ and $p(g_{n+1}(y_{n}), y_{n+1})\leq q((f_{n+1}\times g_{n+1})(x_{n}, y_{n}), (x_{n+1}, y_{n+1}))$, therefore $\gamma = \lbrace x_{n}\rbrace_{n=0}^{\infty}$ and $\eta = \lbrace y_{n}\rbrace_{n=0}^{\infty}$ are $\delta$-average pseudo orbits of $F$ and $G$ respectively. If $\gamma$ and $\eta$ are strongly $\frac{\epsilon}{2}$-shadowed in average by $x$ and $y$ through $F$ and $G$ respectively, then $\lambda$ is strongly $\epsilon$-shadowed in average by $(x,y)$ through $F\times G$. 
\medskip

Conversely, suppose that $F\times G$ has SASP. Let $\epsilon > 0$ and $\delta > 0$ be given for $\frac{\epsilon}{2}$ by strong average shadowing of $F\times G$. Let $\gamma = \lbrace x_{n}\rbrace_{n=0}^{\infty}$ be a $\delta$-average pseudo orbit of $F$.  For some $y\in Y$, let $\eta = \lbrace y_{n} = G_{n}(y) \rbrace_{n=0}^{\infty}$. Clearly, $\lambda = \lbrace (x_{n}, y_{n})\rbrace_{n=0}^{\infty}$ is a $\delta$-average pseudo orbit of $F\times G$ and hence, strongly $\epsilon$-shadowed in average by some point, say $(x, z)$. It is easy to see that, $\gamma$ is strongly $\epsilon$-shadowed in average by $x$ through $F$ and hence, $F$ has SASP. Similarly, one can prove that $G$ has SASP. 
\medskip

Proof of $F$ and $G$ has ALSP if and only if $F\times G$ has ALSP, is left to the reader. 
\end{proof}  

We say that $F$ is transitive if for every pair of non-empty open sets $U$ and $V$, there exists $n\in \mathbb{N}^{+}$ such that $F_{[i, i+(n-1)]}(U)\cap V\neq \phi$ for all $i\in \mathbb{N}^{+}$. 
\medskip

For $x,y\in X$, we write $x\mathsf{R}_{\delta} y$ if there exists $n\in \mathbb{N}^{+}$ such that for each $j\in \mathbb{N}^{+}$ there exists a finite sequence $x = x_{0}^{j},x_{1}^{j}, . . ., x_{n-1}^{j}, x_{n}^{j}=y$ such that $d(f_{j+i}(x_{i}^{j}), x_{i+1}^{j}) < \delta$ for all $0\leq i \leq (n-1)$. We write $x\mathcal{R}_{\delta} y$ if $x \mathsf{R}_{\delta} y$ and $y \mathsf{R}_{\delta} x$ and further, write $x\mathcal{R} y$ if $x\mathcal{R}_{\delta} y$ for all $\delta > 0$. We say that $F$ is chain transitive if $x\mathcal{R} y$ for all $x, y\in X$.
\medskip

If $F=\langle f\rangle$, then this definition boils down to the following in case of autonomous systems.  
\medskip

$f$ is said to be chain transitive if for every $\delta>0$ and $x,y\in X$, there exists a finite sequence $z_{0} = x, z_{1}, z_{2},..., z_{m+1} = y$ of elements in $X$ such that $d(f(z_{i}), z_{i+1}) < \delta$ for all $0\leq i <m$. 

\begin{theorem} 
If $F$ is equicontinuous transitive system, then it is chain transitive. 
\end{theorem}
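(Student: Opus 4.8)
The plan is to reduce chain transitivity to the single claim that $x\mathsf{R}_{\delta}y$ for every $x,y\in X$ and every $\delta>0$. Indeed, once this is known for all \emph{ordered} pairs, applying it to $(x,y)$ and to $(y,x)$ gives both $x\mathsf{R}_{\delta}y$ and $y\mathsf{R}_{\delta}x$, hence $x\mathcal{R}_{\delta}y$, and since $\delta$ was arbitrary, $x\mathcal{R}y$ for all $x,y$, which is chain transitivity. So fix $x,y\in X$ and $\delta>0$. The role of equicontinuity is isolated at the very start: choose $\delta'\in(0,\delta/2)$ small enough that $d(a,b)<\delta'$ forces $d(f_{i}(a),f_{i}(b))<\delta/2$ for \emph{every} $i\in\mathbb{N}^{+}$.

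Next I would feed the open balls $U=B(x,\delta')$ and $V=B(y,\delta')$ (nonempty, since they contain their centres) into the transitivity hypothesis. This yields one $n\in\mathbb{N}^{+}$, independent of the base time, with $F_{[i,i+(n-1)]}(U)\cap V\neq\emptyset$ for all $i\in\mathbb{N}^{+}$. Now fix $j\in\mathbb{N}^{+}$ and pick $u\in B(x,\delta')$ with $F_{[j,j+(n-1)]}(u)\in B(y,\delta')$. I would take the chain $x=x_{0}^{j}$, $x_{i}^{j}=F_{[j,j+i-1]}(u)$ for $1\le i\le n-1$, and $x_{n}^{j}=y$. Then the entry jump satisfies $d(f_{j}(x_{0}^{j}),x_{1}^{j})=d(f_{j}(x),f_{j}(u))<\delta/2$ by equicontinuity; each intermediate jump is exact, $d(f_{j+i}(x_{i}^{j}),x_{i+1}^{j})=0$, because $f_{j+i}\circ F_{[j,j+i-1]}=F_{[j,j+i]}$; and the final jump satisfies $d(f_{j+n-1}(x_{n-1}^{j}),y)=d(F_{[j,j+n-1]}(u),y)<\delta'$. (When $n=1$ the chain is just $x,y$ and the single jump is controlled by $d(f_{j}(x),y)\le d(f_{j}(x),f_{j}(u))+d(f_{j}(u),y)<\delta/2+\delta'<\delta$.) Since $n$ does not depend on $j$, this is exactly $x\mathsf{R}_{\delta}y$.

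The only real obstacle is making these two requirements compatible: the chain length $n$ must be uniform over all starting times $j$, while the entry jump from the \emph{prescribed} point $x$ onto the orbit segment of the transitivity witness $u$ must also be controlled uniformly in $j$. The first is precisely what the non-autonomous formulation of transitivity (``for all $i\in\mathbb{N}^{+}$'') delivers, and the second is exactly what equicontinuity supplies; everything else --- nonemptiness of the balls, the telescoping identity for the intermediate jumps, and the $n=1$ bookkeeping --- is routine.
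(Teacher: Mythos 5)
Your proof is correct and follows essentially the same route as the paper's: use equicontinuity to pick the smaller radius, apply transitivity to the balls $B(x,\delta')$ and $B(y,\delta')$ to get a chain length $n$ uniform in the starting time $j$, and take the chain consisting of $x$, the orbit segment of the transitivity witness, and $y$, so that only the first and last jumps are nonzero and both are controlled. The only differences are minor extra bookkeeping (the $\delta/2$ split, the $n=1$ case, and the explicit remark that applying the argument to both ordered pairs gives $x\mathcal{R}_{\delta}y$), which the paper handles implicitly by noting $x$, $y$, $\epsilon$ were arbitrary.
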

\begin{proof}
For given $\epsilon > 0$, choose $\delta > 0$ by equicontinuity of $F$. By transitivity, choose $n\in \mathbb{N}^{+}$ such that $F_{[j, j+(n-1)]}(B(x, \delta))\cap B(y, \delta)\neq \phi$ for all $j\in \mathbb{N}^{+}$. Thus for each $j\in \mathbb{N}^{+}$, there exists $z^{j}\in B(x, \delta)$ so that the sequence $x = x_{0}^{j}, x_{1}^{j} = f_{j}(z^{j}), x_{2}^{j} = f_{j+1}\circ f_{j}(z^{j}), . . ., x_{n-1}^{j} = f_{j+(n-2)}\circ . . .\circ f_{j+1}\circ f_{j}(z^{j}), x_{n}^{j} = y$ satisfies $d(f_{j+i}(x_{i}^{j}), x_{i+1}^{j}) < \epsilon$ for all $0\leq i \leq (n-1)$ and hence, $x\mathsf{R}_{\epsilon} y$. Since $j, x, y$ and $\epsilon$ were chosen arbitrarily, we conclude that $F$ is chain transitive. 
\end{proof} 

\begin{theorem}
If $F$ is surjective chain transitive system with shadowing property, then $F$ is transitive.    
\end{theorem}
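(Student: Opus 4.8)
The plan is to derive transitivity by running chain transitivity and the shadowing property together: given non‑empty open sets $U,V$, I will manufacture, for each $i\in\mathbb{N}^{+}$, a $\delta$-pseudo orbit of $F$ that sits in $U$ at time $i-1$ and in $V$ at time $i-1+n$, and then shadow it by a point $z$; the point $F_{i-1}(z)$ will witness $F_{[i,i+(n-1)]}(U)\cap V\neq\phi$.

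First I would pick $x\in U$, $y\in V$, and $\epsilon>0$ with $B(x,\epsilon)\subseteq U$ and $B(y,\epsilon)\subseteq V$. Let $\delta>0$ be given by the shadowing property for this $\epsilon$. Since $F$ is chain transitive we have $x\,\mathsf{R}_{\delta}\,y$, so there is a \emph{single} $n\in\mathbb{N}^{+}$ (depending only on $\delta$, hence only on $U$ and $V$, and in particular not on any time index) such that for every $j\in\mathbb{N}^{+}$ there is a finite sequence $x=x_{0}^{j},x_{1}^{j},\dots,x_{n}^{j}=y$ with $d(f_{j+l}(x_{l}^{j}),x_{l+1}^{j})<\delta$ for $0\le l\le n-1$. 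I claim this $n$ works in the definition of transitivity.

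Fix $i\in\mathbb{N}^{+}$ and take the chain above with $j=i$. Define a sequence $\gamma=\{p_{k}\}_{k\in\mathbb{N}}$ by: $p_{i-1+l}=x_{l}^{i}$ for $0\le l\le n$ (so $p_{i-1}=x$ and $p_{i-1+n}=y$); for $0\le k\le i-2$ choose, by downward recursion, $p_{k}\in f_{k+1}^{-1}(p_{k+1})$, which is non‑empty because $F$ is surjective; and for $k\ge i-1+n$ set $p_{k+1}=f_{k+1}(p_{k})$. The prefix and the tail are genuine orbit segments, and on the middle block the shift of the chain by $i-1$ turns $d(f_{i+l}(x_{l}^{i}),x_{l+1}^{i})<\delta$ into $d(f_{k+1}(p_{k}),p_{k+1})<\delta$ (here $k=i-1+l$, so $k+1=i+l$); hence $\gamma$ is a $\delta$-pseudo orbit of $F$. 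By shadowing there is $z\in X$ with $d(F_{k}(z),p_{k})<\epsilon$ for all $k$, so $F_{i-1}(z)\in B(x,\epsilon)\subseteq U$ and $F_{i-1+n}(z)\in B(y,\epsilon)\subseteq V$. Since $F_{[i,i+(n-1)]}\circ F_{i-1}=F_{i-1+n}$, the point $F_{[i,i+(n-1)]}\bigl(F_{i-1}(z)\bigr)=F_{i-1+n}(z)$ lies in $F_{[i,i+(n-1)]}(U)\cap V$, which is therefore non‑empty. As $i$ was arbitrary and $n$ does not depend on $i$, $F$ is transitive.

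The verifications that $\gamma$ is a $\delta$-pseudo orbit and that $F_{[i,i+(n-1)]}\circ F_{i-1}=F_{i-1+n}$ are routine. The one place where care is genuinely needed is the bookkeeping of time indices: the chains supplied by $\mathsf{R}_{\delta}$ carry a starting time $j$ and use $f_{j+l}$ at step $l$, whereas a pseudo orbit uses $f_{k+1}$ at step $k$, so such a chain must be inserted at pseudo-orbit index $j-1$ (not $j$) for the two families of inequalities to coincide; this shift is exactly why one must prepend a length-$(i-1)$ orbit segment ending at $p_{i-1}=x$, which is where surjectivity of $F$ enters — so the surjectivity hypothesis is genuinely used.
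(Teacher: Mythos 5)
Your proof is correct and follows essentially the same route as the paper's: extend the chain from $x$ to $y$ supplied by $\mathsf{R}_{\delta}$ to a full $\delta$-pseudo orbit by prepending a backward orbit segment (which is where surjectivity enters) and appending a forward orbit, then shadow it and push the shadowing point forward. Your version is, if anything, more careful about the time-index shift and about why the same $n$ works for every $i$, points the paper's proof leaves implicit.
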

\begin{proof}
Let $x, y\in X$ and $\epsilon > 0$. Let $\delta > 0$ be given for $\epsilon$ by the shadowing property of $F$. By chain transitivity of $F$, there exists $n\in\mathbb{N}^+$ such that for each $j\in \mathbb{N}^{+}$ there exists a finite sequence $x = x_{0}^{j},x_{1}^{j}, . . ., x_{n-1}^{j}, x_{n}^{j}=y$ satisfying $d(f_{j+i}(x_{i}^{j}), x_{i+1}^{j}) < \delta$ for all $0\leq i \leq (n-1)$. Extend this sequence to a $\delta$-pseudo orbit $\eta = \lbrace z_{0},. . . , z_{j-3}, z_{j-2}, z_{j-1} = x = x_{0}^{j}, z_{j} = x_{1}^{j}, . . ., z_{j+(n-2)} = x_{n-1}^{j}, z_{j+(n-1)} = x_{n}^{j}= y, z_{j+n} = f_{j+n}(y),z_{j+(n+1)} = f_{j+(n+1)}(z_{j+n}), . . .\rbrace$, where $f_{i}(z_{i-1}) = z_{i}$ for $1\leq i\leq (j-1)$ and $z_{j+(n+k)} = f_{j+(n+k)}(z_{j+(n+k-1)})$ for all $k\geq 2$. By the shadowing property, there exists $w\in X$ such that $d(F_{n}(w), z_{n}) < \epsilon$ for all $n\in \mathbb{N}^{+}$. Therefore, $F_{j-1}(w) \in B(x, \epsilon)$ and $F_{[j, j+(n-1)]}(F_{j-1})(w) \in B(y, \epsilon)$. Hence, $F_{[j, j+(n-1)]}(B(x, \epsilon))\cap B(x, \delta) \neq \phi$. Since $j, x, y$ and $\epsilon$ were chosen arbitrarily, we conclude that $F_{[j, j+(n-1)]}(B(x, \epsilon))\cap B(x, \delta) \neq \phi$ for all $j\in \mathbb{N}^{+}$.  Hence the result.  
\end{proof}

The following two results show relations among shadowing property, almost shadowing property, average shadowing property and strong average shadowing property in case of an autonomous system. Unfortunately, we do not know much in case of a nonautonomous system.  

\begin{Lemma}
Let $F=\langle f\rangle$ on compact $X$ be chain transitive. Then, $F$ has ALSP if and only if it has shadowing property.  
\label{ET2.6}
\end{Lemma}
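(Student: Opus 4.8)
The plan is to split into the two implications, the easy one first. If $F=\langle f\rangle$ has the shadowing property, then given $\epsilon>0$ I would apply shadowing to $\epsilon/2$ to obtain $\delta>0$; any $\delta$-pseudo orbit $\gamma=\{x_n\}$ is then $\epsilon/2$-shadowed by some $z\in X$, and since $F_0$ is the identity we get $d(z,x_0)=d(F_0(z),x_0)<\epsilon/2<\epsilon$ and $\limsup_{n\to\infty}d(F_n(z),x_n)\le\epsilon/2<\epsilon$, so $\gamma$ is almost $\epsilon$-shadowed by $z$; hence ALSP holds.

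For the converse, fix $\epsilon>0$ and let $\delta>0$ be the constant furnished by ALSP applied to $\epsilon/2$. The plan is to first show that every \emph{finite} $\delta$-pseudo orbit is $\epsilon/2$-shadowed and then pass to infinite ones by compactness: given an infinite $\delta$-pseudo orbit $\{x_n\}_{n\ge0}$ and, for each $m$, a point $w_m$ with $d(F_n(w_m),x_n)<\epsilon/2$ for $0\le n\le m$, compactness of $X$ yields a subsequence $w_{m_k}\to w$, and continuity of each $F_n=f^{\,n}$ gives $d(F_n(w),x_n)\le\epsilon/2<\epsilon$ for every $n$, so $w$ $\epsilon$-shadows $\{x_n\}$.

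The crux is to shadow a finite segment $\{x_0,\dots,x_m\}$, and this is where chain transitivity enters. I would choose a $\delta$-chain $x_m=y_0,y_1,\dots,y_l=x_0$ from $x_m$ to $x_0$ and let $\{x^{\ast}_n\}_{n\ge0}$ be the periodic sequence of period $p=m+l$ whose first period is $x_0,x_1,\dots,x_{m-1},x_m,y_1,\dots,y_{l-1}$. Every consecutive jump is less than $\delta$, so $\{x^{\ast}_n\}$ is a $\delta$-pseudo orbit of $F$ with $x^{\ast}_n=x_n$ for $0\le n\le m$. Applying ALSP, there is $z\in X$ with $\limsup_{n\to\infty}d(F_n(z),x^{\ast}_n)<\epsilon/2$; fixing $N$ with $d(F_n(z),x^{\ast}_n)<\epsilon/2$ for all $n\ge N$ and letting $N'$ be the least multiple of $p$ with $N'\ge N$, I would set $w=F_{N'}(z)$. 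Since $F=\langle f\rangle$ and $\{x^{\ast}_n\}$ has period $p$, it follows that $d(F_n(w),x^{\ast}_n)=d(F_{N'+n}(z),x^{\ast}_{N'+n})<\epsilon/2$ for every $n\ge0$, and in particular $d(F_n(w),x_n)<\epsilon/2$ for $0\le n\le m$, as needed.

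The main obstacle is precisely this last step. ALSP only constrains the asymptotic behaviour of an orbit, so a point that almost-shadows a pseudo orbit can a priori drift far away over an uncontrolled finite window; the device that defeats this is to feed ALSP a \emph{periodic} pseudo orbit, for then asymptotic shadowing is upgraded to genuine shadowing from time zero simply by translating the shadowing point forward by a full period, and chain transitivity is exactly what allows an arbitrary finite segment to be completed to such a periodic pseudo orbit. The remaining ingredients — the compactness/diagonal passage from finite to infinite pseudo orbits and the reverse implication — are routine; the one point that needs care is that $\delta$ is obtained from ALSP for $\epsilon/2$ once and for all, before the segment length $m$ is chosen, so that it genuinely witnesses the shadowing property.
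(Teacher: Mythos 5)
Your proof is correct, but it is organized differently from the paper's. The paper argues by contradiction: assuming ALSP without shadowing, it picks for each $n$ a $\frac{1}{n}$-pseudo orbit $\alpha_n$ that cannot be $\epsilon$-shadowed, uses chain transitivity to splice these into a single $\delta$-pseudo orbit $\alpha_k\gamma_k\alpha_{k+1}\gamma_{k+1}\dots$, almost $\epsilon$-shadows it, and observes that every block $\alpha_j$ lying beyond the asymptotic threshold is then genuinely $\epsilon$-shadowed (by a forward iterate of the almost-shadowing point, using autonomy), a contradiction; note that this tacitly takes the bad pseudo orbits to be finite, i.e.\ it uses the standard equivalence of shadowing with finite shadowing on compact spaces. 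You instead give a direct argument: chain transitivity closes an arbitrary finite $\delta$-segment into a periodic $\delta$-pseudo orbit, and ALSP together with periodicity and autonomy ($F_n=f^{\,n}$, so translating the almost-shadowing point forward by a full period upgrades asymptotic shadowing to shadowing from time zero) shadows the segment; the passage from finite to infinite pseudo orbits is then carried out explicitly by the usual compactness/limit argument. The underlying mechanism is the same in both proofs --- place the segment of interest far out in the tail of some $\delta$-pseudo orbit and exploit that tails of orbits of an autonomous system are again orbits --- but your version is constructive rather than by contradiction, handles one segment at a time via periodic closing instead of an infinite concatenation of bad segments, and makes explicit the finite-to-infinite step that the paper leaves implicit, which is arguably a cleaner exposition of the same idea.
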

\begin{proof}
As the converse implication is automatic from the definition, we prove the forward implication. Suppose that $F$ has ALSP but does not have shadowing. For any given $\epsilon > 0$ and for each $n\in\mathbb{N}$, we can choose $\frac{1}{n}$-pseudo orbit $\alpha_{n}$ for $F$ which cannot be $\epsilon$-shadowed. Let $\delta>0$ be given for this $\epsilon$ by ALSP and fix $k\in \mathbb{N}^{+}$ such that $\frac{1}{k}<\delta$. By chain transitivity, choose finite $\frac{1}{m}$-pseudo orbits $\gamma_{m}$ for $f$ such that $\alpha_{m}\gamma_{m}\alpha_{m+1}$ forms a finite $\frac{1}{m}$-pseudo orbits for $f$, for all $m\geq k$. Clearly, $\alpha_{k}\gamma_{k}\alpha_{k+1}\gamma_{k+1}\alpha_{k+2}. . .$ forms a $\delta$-pseudo orbit for $f$ and hence, it can be almost $\epsilon$-shadowed. Therefore, there exists $p\in \mathbb{N}^{+}$ such that for all $j\geq p$, $\alpha_{j}$ is $\epsilon$-shadowed by some point in $X$, a contradiction. 
\end{proof}

\begin{theorem}
If $f$ is chain transitive continuous surjective map with ALSP on a compact metric space $X$, then $f$ has average shadowing property if and only if $f$ has SASP. 
\label{EC2.7}
\end{theorem}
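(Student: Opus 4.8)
The forward implication is immediate: a point that strongly $\epsilon$-shadows a sequence in average in particular $\epsilon$-shadows it in average, so SASP trivially implies the average shadowing property. For the converse I would first cash the hypotheses into the two tools actually needed. By Lemma \ref{ET2.6}, chain transitivity together with compactness and ALSP already forces $f$ to have the ordinary shadowing property. Secondly, the average shadowing property forces a chain transitive surjective map of a compact space to be chain mixing: were $X$ to split cyclically into classes $C_{0},\dots,C_{k-1}$ with $k\ge 2$, one could exhibit a $\delta$-average pseudo orbit that slips through the cyclic phases on a set of times of density zero, and then no single orbit could $\epsilon$-shadow it in average. Chain mixing supplies, for each $\delta>0$, a number $L_{0}(\delta)$ such that any two points of $X$ are joined by a $\delta$-chain of every length $\ge L_{0}(\delta)$, and it is exactly this uniform-length feature that the argument needs.

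Now fix $\epsilon>0$ and choose $\delta>0$ so small that every $\delta$-average pseudo orbit is $\tfrac{\epsilon}{2}$-shadowed in average and every $\delta$-pseudo orbit is $\tfrac{\epsilon}{2}$-shadowed. Let $\gamma=\{x_{n}\}$ be a $\delta$-average pseudo orbit and, by the average shadowing property, pick $z\in X$ with $\limsup_{n\to\infty}\frac{1}{n}\sum_{i=0}^{n-1}d(f^{i}(z),x_{i})<\tfrac{\epsilon}{2}$. Put $g=L_{0}(\delta)$, take a $\delta$-chain $x_{0}=c_{0},c_{1},\dots,c_{g}=f^{g}(z)$, and form the genuine $\delta$-pseudo orbit $\beta=\{c_{0},c_{1},\dots,c_{g-1},f^{g}(z),f^{g+1}(z),f^{g+2}(z),\dots\}$; the step from index $g-1$ to $g$ is legitimate by the chain condition, and every step from index $g$ onward is an exact orbit step (distance $0<\delta$). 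Applying the shadowing property to $\beta$ yields $w\in X$ with $d(f^{n}(w),\beta_{n})<\tfrac{\epsilon}{2}$ for every $n$; in particular $d(w,x_{0})<\tfrac{\epsilon}{2}$ and $d(f^{n}(w),f^{n}(z))<\tfrac{\epsilon}{2}$ for every $n\ge g$.

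Since only the $g$ indices below $g$ are uncontrolled and $X$ is bounded, their total contribution to the average is at most a constant times $g/n$, so
\[
\limsup_{n\to\infty}\frac{1}{n}\sum_{i=0}^{n-1}d(f^{i}(w),x_{i})\ \le\ \frac{\epsilon}{2}+\limsup_{n\to\infty}\frac{1}{n}\sum_{i=0}^{n-1}d(f^{i}(z),x_{i})\ <\ \epsilon,
\]
which together with $d(w,x_{0})<\tfrac{\epsilon}{2}<\epsilon$ says exactly that $\gamma$ is strongly $\epsilon$-shadowed in average by $w$; hence $f$ has SASP. The step I expect to be the genuine obstacle is the synchronisation hidden in the construction of $\beta$: the transition from $x_{0}$ onto the orbit of $z$ must reach that orbit at the time index matching its phase, for otherwise an index-shift $L$ is introduced and the term $\limsup_{n\to\infty}\frac{1}{n}\sum_{i=0}^{n-1}d(x_{i},x_{i+L})$ — which is not small in general for an average pseudo orbit — wrecks the final estimate. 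Manufacturing a $\delta$-chain from $x_{0}$ to $f^{g}(z)$ of the prescribed length $g$ is precisely what compels us to invoke surjectivity together with the chain-mixing property extracted from the average shadowing property.
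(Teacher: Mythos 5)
Your proposal is correct in substance but follows a genuinely different route from the paper. The paper's proof is essentially a chain of citations: from chain transitivity and ALSP it obtains the shadowing property via Lemma \ref{ET2.6} (exactly as you do), and then invokes Theorem 1 and Lemma 12 of \cite{KOA} to pass through the specification property and land on SASP. You bypass specification entirely: take the average-shadowing point $z$ of the given $\delta$-average pseudo orbit, splice a $\delta$-chain of length exactly $g$ from $x_{0}$ to $f^{g}(z)$ onto the true orbit of $z$, and use ordinary shadowing to get $w$ with $d(w,x_{0})<\tfrac{\epsilon}{2}$ and $d(f^{n}(w),f^{n}(z))<\tfrac{\epsilon}{2}$ for $n\geq g$; since the $g$ uncontrolled initial terms contribute $O(g/n)$ to the Ces\`aro average, $w$ strongly $\epsilon$-shadows in average. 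That estimate is correct, and your insistence on a connecting chain of length exactly $g$ (so that no index shift appears) is precisely the right point to guard. What your route buys is a direct, elementary construction; what it costs is that two auxiliary facts are asserted rather than proved: (i) that the average shadowing property upgrades chain transitivity to chain mixing --- your cyclic-decomposition, density-zero phase-slip sketch is the right idea, but this is a genuine lemma in its own right (it is essentially available in \cite{KOA} and can simply be cited), and (ii) that on a compact space chain mixing yields one constant $L_{0}(\delta)$ valid uniformly for all pairs of endpoints, which you really need because $g=L_{0}(\delta)$ while the target $f^{g}(z)$ depends on $g$; this uniformity is standard but should be stated or proved. With those two inputs supplied, your argument is complete, and arguably more self-contained than the paper's detour through the specification property.
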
 
\begin{proof}
Since the converse implication is automatic, we prove the forward implication. By Theorem \ref{ET2.6}, $f$ has shadowing property. By Theorem 1 \cite{KOA}, $f$ has specification property and Lemma 12 \cite{KOA} implies that $f$ has SASP. 
\end{proof}

\begin{Example}
Let $X=\mathbb{R}$ be given with the usual metric and choose $m  > 1$. Define $g_{m} : X\rightarrow X$ by $g_m(x) = mx$ for all $x\in X$. Consider $F_{m} = \lbrace f_{i}\rbrace_{i\in \mathbb{N}^{+}}$ such that for every $i\in \mathbb{N}^{+}$, exactly one of the triplet $f_{3i+1}, f_{3i+2}, f_{3i+3}$ is $g_{m}$ and the other two are identity maps on $X$. Note that, $F_{m}$ need not be a periodic system. Also, $F_{m}$ is equicontinuous, mean equicontinuous, recurrently expansive and mean expansive. Since $F_{m}^{3} = \langle g_{m}\rangle$, by Proposition \ref{P3.13}, $F_m$ has ALSP. 
\label{4.13}
\end{Example} 

\section{Sufficient Conditions For Topological Stability}

For a metric space $(X,d)$, define the bounded metric $d_{1}$ by $d_{1}(x,y)=$min$\lbrace d(x,y), 1\rbrace$. Let $(C(X),\eta)$ be the space of all continuous self maps on $X$, where the metric $\eta$ is defined by $\eta(f, g) =$ sup$_{x\in X} d_{1}(f(x), g(x)$). We define a metric $\gamma$ on $N(X)$ as $\gamma(F, G) = $sup$_{i\in \mathbb{N}} \eta( f_i, g_i)$, where $F = \lbrace f_i\rbrace_{i\in \mathbb{N}^{+}}$ and $G = \lbrace g_i\rbrace_{i\in \mathbb{N}^{+}}$.   

\begin{Definition}
$F\in NC(X)$ is said to be topologically stable if for every $\epsilon > 0$ there is $ \delta > 0$ such that for any $G\in NC(X)$ satisfying $\gamma(F, G) < \delta$ there exists a continuous map $h: X\rightarrow X$ such that $f_{i}\circ h = h\circ g_{i}$ for all $i\in \mathbb{N}$ and $d(h(x), x) < \epsilon$ for all $x\in X$.
\label{D4.1}
\end{Definition}

Note that if $F\in NC(X)$, then this notion is stronger than the notion of topological stability used in \cite{TDT}.   

\begin{theorem}
Let $(X,d)$ and $(Y,p)$ be two metric spaces and $F\in NC(X)$, $H\in NC(Y)$. If $F$ and $G$ are uniformly conjugate, then $F$ is topologically stable if and only if $G$ is topologically stable. In other words, topological stability is a uniform dynamical property. 
\label{T4.2}
\end{theorem}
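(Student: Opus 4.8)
The plan is to transport the definition of topological stability along the conjugating map. Let $\varphi\colon Y\to X$ be the uniform equivalence realizing the conjugacy, so that $f_n\circ\varphi=\varphi\circ g_n$, equivalently $g_n\circ\varphi^{-1}=\varphi^{-1}\circ f_n$, for all $n\in\mathbb{N}$. Since for any metric space the bounded metric $d_1$ and $d$ induce the same uniformity (they agree on pairs at distance $<1$), a map is uniformly continuous for the original metrics if and only if it is uniformly continuous for the bounded ones; in particular $\varphi$ and $\varphi^{-1}$ are uniformly continuous with respect to both $d,p$ and $d_1,p_1$. By symmetry (interchange $F\leftrightarrow G$ and $\varphi\leftrightarrow\varphi^{-1}$) it is enough to show that if $F$ is topologically stable then so is $G$.

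Fix $\epsilon>0$. I would choose the constants in the following order. By uniform continuity of $\varphi^{-1}$ (for $d,p$) pick $\epsilon'>0$ with $d(x_1,x_2)<\epsilon'\ \Rightarrow\ p(\varphi^{-1}(x_1),\varphi^{-1}(x_2))<\epsilon$. Apply topological stability of $F$ to $\epsilon'$ to obtain $\delta'>0$. By uniform continuity of $\varphi$ (for $d_1,p_1$) pick $\delta>0$ with $p_1(y_1,y_2)<\delta\ \Rightarrow\ d_1(\varphi(y_1),\varphi(y_2))\le\delta'/2$. I claim this $\delta$ witnesses topological stability of $G$ at $\epsilon$.

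Let $G'=\{g_i'\}\in NC(Y)$ satisfy $\gamma(G,G')<\delta$, and set $F'=\{\varphi\circ g_i'\circ\varphi^{-1}\}_{i\in\mathbb{N}^{+}}$. Each $\varphi\circ g_i'\circ\varphi^{-1}$ is uniformly continuous, and conjugation by $\varphi$ preserves commutativity, so $F'$ is commutative because $G'$ is; hence $F'\in NC(X)$. For $x\in X$ and $i\in\mathbb{N}^{+}$, writing $y=\varphi^{-1}(x)$ and using $f_i(\varphi(y))=\varphi(g_i(y))$, one gets $d_1(f_i(x),\varphi(g_i'(\varphi^{-1}(x))))=d_1(\varphi(g_i(y)),\varphi(g_i'(y)))\le\delta'/2$, since $p_1(g_i(y),g_i'(y))\le\gamma(G,G')<\delta$; taking suprema gives $\gamma(F,F')\le\delta'/2<\delta'$. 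By topological stability of $F$ there is a continuous $h\colon X\to X$ with $f_i\circ h=h\circ(\varphi\circ g_i'\circ\varphi^{-1})$ for all $i$ and $d(h(x),x)<\epsilon'$ for all $x\in X$. Put $k=\varphi^{-1}\circ h\circ\varphi\colon Y\to Y$, which is continuous. Using $g_i\circ\varphi^{-1}=\varphi^{-1}\circ f_i$ and the conjugacy relation for $h$, $g_i\circ k=\varphi^{-1}\circ f_i\circ h\circ\varphi=\varphi^{-1}\circ h\circ\varphi\circ g_i'\circ\varphi^{-1}\circ\varphi=k\circ g_i'$ for all $i$, and for $y\in Y$ the bound $d(h(\varphi(y)),\varphi(y))<\epsilon'$ together with the choice of $\epsilon'$ yields $p(k(y),y)=p(\varphi^{-1}(h(\varphi(y))),\varphi^{-1}(\varphi(y)))<\epsilon$. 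Hence $G$ is topologically stable, and the reverse implication follows by the symmetry noted above.

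I expect the only genuinely delicate point to be the bookkeeping between the two metrics: the perturbation metric $\gamma$ and the hypothesis $\gamma(F,F')<\delta'$ are phrased with the bounded metrics $d_1,p_1$, whereas the displacement bounds $d(h(x),x)<\epsilon'$ and $p(k(y),y)<\epsilon$ use the honest metrics, so uniform continuity of $\varphi^{\pm1}$ must be invoked once for $d,p$ and once for $d_1,p_1$, with the constants chosen in exactly the order above. A secondary and purely routine check is that $F'\in NC(X)$, and in particular that $F'$ is commutative — this is precisely the place where the restriction to commutative perturbations in Definition \ref{D4.1} is used, so that topological stability of $F$ is applicable to $F'$.
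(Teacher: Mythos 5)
Your proposal is correct and follows essentially the same route as the paper: conjugate the perturbation $G'$ of $G$ by the uniform equivalence to obtain a perturbation of $F$, apply topological stability of $F$, and conjugate the resulting semiconjugacy back, with the constants chosen in the same order (uniform continuity of $\varphi^{-1}$, then stability of $F$, then uniform continuity of $\varphi$). Your extra care with the bounded metric $d_1$ versus $d$ and the explicit check that the conjugated perturbation stays in $NC(X)$ are points the paper treats implicitly, but they do not constitute a different argument.
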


\begin{proof}
Suppose that $F$ is topologically stable. Let $j:Y\rightarrow X$ be a uniform conjugacy between $F$ and $H$ i.e. $f_{i}\circ j = j\circ h_{i}$ for all $i\in \mathbb{N}$. We want to show that $H$ is topologically stable. For $\epsilon\in (0,1)$, let $\beta\in (0,1)$ be given for $\epsilon$ by the uniform continuity of $j^{-1}$ i.e. $d(x,y)<\beta$ implies $p(j^{-1}(x),j^{-1}(y))<\epsilon$. Let $\alpha\in (0,1)$ be given for $\beta$ by the topological stability of $F$. Further, let $\delta \in (0,1)$ be given for $\alpha$ by the uniform continuity of $j$ i.e. $p(x,y)<\delta$ implies $d(j(x),j(y))<\alpha$. Let $G\in NC(Y)$ be such that $\gamma(H, G) < \delta$. Hence, sup$_{i\in \mathbb{N}}\eta^{Y}(h_{i}, g_{i}) < \delta$ implying $\eta(j^{-1}\circ f_{i}\circ j, g_{i}) < \delta$ for all $i\in \mathbb{N}$. This implies that $d_{1}(j^{-1}\circ f_{i}\circ j, g_{i}) < \delta$ for all $i\in \mathbb{N}$. By uniform continuity of $j$, we get that $d_{1}(f_{i}\circ j(y), j\circ g_{i}\circ j^{-1}(j(y))) < \alpha$ for all $i\in \mathbb{N}$ and all $y\in Y$. Set $G' = \lbrace g'_{i} =  j\circ g_{i}\circ j^{-1} \rbrace_{i\in \mathbb{N}^{+}}$. By topological stability of $F$, there exists a continuous map $k : X\rightarrow X$ such that $f_{i}\circ k = k\circ g'_{i}$ for all $i\in \mathbb{N}^{+}$ and $d(k(x), x) < \beta$ for all $x\in X$. If we set $k' = j^{-1}\circ k\circ j$, then $h_{i}\circ k' = h_{i}\circ j^{-1}\circ k\circ j = j^{-1}\circ f_{i}\circ k\circ j = j^{-1}\circ k\circ g'_{i}\circ j = j^{-1}\circ k\circ j\circ g_{i}\circ j^{-1}\circ j = k'\circ g_{i}$ for all $i\in \mathbb{N}$. Also, by uniform continuity of $j^{-1}$, we have $p(k'(y), y) < \epsilon$ for all $y\in Y$. Hence the result. 

A proof of the converse implication follows immediately from the fact that $j$ is an uniform equivalence. 
\end{proof}

We say that a metric space is Mandelkern locally compact \cite{M} if every bounded subset is contained in a compact set. Observe that, a metric space is Mandelkern locally compact if and only if every closed ball of finite radius is compact. From now onwards, we assume that $X$ is a Mandelkern locally compact metric space. Without loss of generality, we also assume that $0 < \epsilon, \delta, \alpha, \beta, \mathfrak{c}, \mathfrak{c}' < 1$.

\begin{theorem}
Let $F\in NC(X)$ be equicontinuous recurrently expansive with expansive constant $\mathfrak{c}$. If $F$ has ALSP, then $F$ is topologically stable. Moreover, for every $\epsilon\in (0,\frac{\mathfrak{c}}{3})$ there exists $\delta > 0$ such that if $G\in NC(X)$ satisfies $\gamma(F, G) < \delta$, then there exists a unique continuous map $h : X\rightarrow X$ such that $F_{n}\circ h = h\circ G_{n}$ for all $n\in \mathbb{N}$ and $d(h(x), x) < \epsilon$ for all $x\in X$. In addition, if $G$ is expansive with expansive constant $\mathfrak{c'}\geq 3\epsilon$, then the conjugating map $h$ is injective.  
\label{T4.3}
\end{theorem}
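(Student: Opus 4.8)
The plan is to run the classical Walters-type argument — build a conjugating map $h$ by shadowing $G$-orbits and pin it down with expansivity — but adapted to the two weakenings present here: \emph{almost} shadowing gives only $\limsup$-control on the shadowing point, and \emph{recurrent} expansivity gives only a $\limsup$ lower bound for distinct orbits. I would therefore run all the ``uniqueness'' arguments purely at the level of $\limsup$'s, and use equicontinuity together with the commutativity of $F$ and $G$ to force the conjugacy identity through. In fact I would prove the stronger identity $f_i\circ h=h\circ g_i$ for every $i\in\mathbb{N}^{+}$; composing these yields both the asserted $F_n\circ h=h\circ G_n$ and topological stability in the sense of Definition~\ref{D4.1}.

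\emph{Construction of $h$.} Fix $\epsilon\in(0,\mathfrak{c}/3)$, pick $\tau\in(0,\mathfrak{c}/3)$, and use equicontinuity of $F$ to get $\sigma>0$ with $d(a,b)<\sigma\Rightarrow d(f_i(a),f_i(b))<\tau$ for all $i$. Set $\epsilon'=\min\{\epsilon,\sigma\}$, let $\delta_0$ be the constant given for $\epsilon'$ by ALSP, and choose $0<\delta<\min\{\delta_0,1\}$ so small that $\epsilon'+\tau+\delta<\mathfrak{c}$. Given $G\in NC(X)$ with $\gamma(F,G)<\delta$, each $G$-orbit $\xi_x=\{G_n(x)\}_{n\ge0}$ is a $\delta$-pseudo orbit of $F$, since $d(f_{n+1}(G_n(x)),G_{n+1}(x))=d(f_{n+1}(z),g_{n+1}(z))<\delta$ for $z=G_n(x)$ (here $\gamma(F,G)<\delta<1$ lets one replace $d_1$ by $d$). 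By ALSP there is $z$ with $d(z,x)<\epsilon'$ and $\limsup_{n}d(F_n(z),G_n(x))<\epsilon'$; any second such $z'$ has $\limsup_n d(F_n(z),F_n(z'))<2\epsilon'<\mathfrak{c}$, so $z=z'$ by recurrent expansivity. Define $h(x)=z$.

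\emph{Conjugacy.} Fix $i$ and $x$. By commutativity of $G$, $G_m(g_i(x))=g_i(G_m(x))$ for all $m$, so $h(g_i(x))$ $\limsup$-$\epsilon'$-shadows $\{g_i(G_m(x))\}_m$. For all large $m$ we have $d(F_m(h(x)),G_m(x))<\epsilon'\le\sigma$, hence $d(f_i(F_m(h(x))),f_i(G_m(x)))<\tau$ by equicontinuity, while $d(f_i(G_m(x)),g_i(G_m(x)))<\delta$; since $f_i(F_m(h(x)))=F_m(f_i(h(x)))$ by commutativity of $F$, this gives $\limsup_m d(F_m(f_i(h(x))),g_i(G_m(x)))\le\tau+\delta$. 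Adding, $\limsup_m d(F_m(h(g_i(x))),F_m(f_i(h(x))))<\epsilon'+\tau+\delta<\mathfrak{c}$, so recurrent expansivity forces $f_i(h(x))=h(g_i(x))$. Composing these identities gives $F_n\circ h=h\circ G_n$, whence $d(F_n(h(x)),G_n(x))=d(h(G_n(x)),G_n(x))<\epsilon'$ for every $n$: $h(x)$ in fact $\epsilon'$-shadows $\xi_x$. For continuity, if $x_k\to x$ then $\{h(x_k)\}$ is bounded, hence precompact since $X$ is Mandelkern locally compact; passing to the limit along a convergent subsequence $h(x_{k_j})\to w$ in $d(F_n(h(x_{k_j})),G_n(x_{k_j}))<\epsilon'$ (using continuity of $F_n,G_n$) yields $d(F_n(w),G_n(x))\le\epsilon'$ for all $n$, so $\limsup_n d(F_n(w),F_n(h(x)))\le2\epsilon'<\mathfrak{c}$ and $w=h(x)$; as every subsequential limit equals $h(x)$, $h$ is continuous. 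The estimate $d(h(x),x)<\epsilon'\le\epsilon$ is immediate. For uniqueness, if $h'$ is continuous with $F_n\circ h'=h'\circ G_n$ and $d(h'(y),y)<\epsilon$ for all $y$, then $d(F_n(h'(x)),G_n(x))=d(h'(G_n(x)),G_n(x))<\epsilon$, so $\limsup_n d(F_n(h(x)),F_n(h'(x)))<\epsilon'+\epsilon<\mathfrak{c}$ and $h=h'$. Finally, if $G$ is expansive with constant $\mathfrak{c}'\ge3\epsilon$ and $h(x)=h(y)$, then $d(G_n(x),G_n(y))\le d(G_n(x),F_n(h(x)))+d(F_n(h(y)),G_n(y))<2\epsilon'\le2\epsilon<\mathfrak{c}'$ for all $n$, forcing $x=y$.

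I expect the conjugacy step to be the main obstacle. Because ALSP provides only $\limsup$-shadowing, the usual ``shift the pseudo orbit'' trick for $f_i\circ h=h\circ g_i$ is not available; instead the identity has to be extracted from recurrent expansivity applied to a $\limsup$ estimate, which is exactly what imposes the quantitative bookkeeping $\epsilon'+\tau+\delta<\mathfrak{c}$ and the essential use of commutativity to move $g_i$ past $G_m$ and $f_i$ past $F_m$ (equicontinuity keeping the single extra application of $f_i$ under control uniformly in $i$). A secondary point is that genuine $\epsilon'$-shadowing — which the continuity, uniqueness and injectivity arguments all use — is not among the hypotheses but is recovered a posteriori once the conjugacy identity is established; the hypothesis $\epsilon<\mathfrak{c}/3$ leaves ample room for all the $\limsup$ comparisons above.
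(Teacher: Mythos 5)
Your proposal is correct and follows essentially the same route as the paper: define $h(x)$ as the unique almost-shadowing point of the $G$-orbit (the paper's Lemma~\ref{L4.4}), force $f_i\circ h=h\circ g_i$ through the same three-term $\limsup$ estimate using commutativity of $F$ and $G$ together with equicontinuity, and get uniqueness and injectivity from the same $2\epsilon$-type estimates and recurrent expansivity (resp.\ expansivity of $G$). The only minor divergence is the continuity step, where you argue directly by sequential compactness of the bounded set $\{h(x_k)\}$ and uniqueness of the shadowing point, whereas the paper first proves the finite-time Lemma~\ref{L4.5} from compactness of $B[x_{0},\mathfrak{c}]$ and then uses continuity of $G_n$ for $n\leq N$; both hinge on the same Mandelkern local compactness and recurrent expansivity.
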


\begin{Lemma}
Let $F$ be recurrently expansive with expansive constant $\mathfrak{c}$ and let $F$ has ALSP. For $\epsilon\in (0,\frac{\mathfrak{c}}{3})$, let $\delta\in (0,\frac{\mathfrak{c}}{3})$ be given by ALSP of $F$. Then, every $\delta$-pseudo orbit of $F$ can be almost $\epsilon$-shadowed by exactly one point.  
\label{L4.4}
\end{Lemma}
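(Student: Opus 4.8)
The plan is as follows. The \emph{existence} of an almost $\epsilon$-shadowing point is nothing but the hypothesis: since $\delta$ is, by assumption, a constant furnished by the almost shadowing property of $F$ for the given $\epsilon$, every $\delta$-pseudo orbit $\gamma=\{x_n\}_{n\in\mathbb{N}}$ of $F$ is almost $\epsilon$-shadowed by at least one point of $X$. So the entire content of the lemma is \emph{uniqueness}, and this is where recurrent expansivity enters.

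Next I would take a $\delta$-pseudo orbit $\gamma=\{x_n\}_{n\in\mathbb{N}}$ of $F$ which is almost $\epsilon$-shadowed by two points $z,w\in X$, and aim for $z=w$. By the definition of almost $\epsilon$-shadowing one has $\limsup_{n\to\infty} d(F_n(z),x_n)<\epsilon$ and $\limsup_{n\to\infty} d(F_n(w),x_n)<\epsilon$. Applying the triangle inequality $d(F_n(z),F_n(w))\le d(F_n(z),x_n)+d(x_n,F_n(w))$ together with the subadditivity of $\limsup$, I would obtain
\[
\limsup_{n\to\infty} d(F_n(z),F_n(w))\ \le\ \limsup_{n\to\infty} d(F_n(z),x_n)+\limsup_{n\to\infty} d(F_n(w),x_n)\ <\ 2\epsilon\ <\ \tfrac{2\mathfrak{c}}{3}\ <\ \mathfrak{c},
\]
where the last two inequalities use the standing assumption $\epsilon\in(0,\tfrac{\mathfrak{c}}{3})$.

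Finally, if $z\neq w$, then recurrent expansivity of $F$ with expansive constant $\mathfrak{c}$ forces $\limsup_{n\to\infty} d(F_n(z),F_n(w))>\mathfrak{c}$, contradicting the displayed bound; hence $z=w$, which gives uniqueness and completes the argument. I do not expect a genuine obstacle here: the only point requiring care is that almost shadowing controls \emph{only} the asymptotic ($\limsup$) behaviour of an orbit, so ordinary expansivity would be useless and one truly needs the recurrent version, while the numerical margin $\epsilon<\mathfrak{c}/3$ (hence $2\epsilon<\mathfrak{c}$) is exactly what closes the gap. The slack between $2\epsilon$ and $\mathfrak{c}$, and the restriction $\delta<\mathfrak{c}/3$, are not needed for the lemma itself and will presumably be used in the proof of Theorem \ref{T4.3}.
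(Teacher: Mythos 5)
Your proposal is correct and is essentially the paper's own argument: existence comes directly from the choice of $\delta$ via ALSP, and uniqueness follows from the triangle inequality, subadditivity of $\limsup$, and the bound $2\epsilon<\mathfrak{c}$, which contradicts recurrent expansivity if the two shadowing points were distinct. Your closing remark that only the asymptotic control matters (so recurrent, not plain, expansivity is the right hypothesis) matches the paper's use of the definition exactly.
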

\begin{proof}
Let $\lbrace x_{n}\rbrace_{n\in\mathbb{N}}$ be a $\delta$-pseudo orbit of $F$ and Suppose that $x,y\in X$ almost $\epsilon$-shadow a $\delta$-pseudo orbit $\lbrace x_n\rbrace_{n\in\mathbb{N}}$ of $F$. Since $d(F_{n}(x), F_{n}(y)) \leq d(F_{n}(x), x_{n}) + d(x_{n}, F_{n}(y))$ for all $n\in \mathbb{N}$, we have  
\begin{align*}
\limsup\limits_{n\rightarrow \infty}d(F_{n}(x), F_{n}(y)) &\leq \limsup\limits_{n\rightarrow \infty}(d(F_{n}(x), x_{n}) + d(x_{n}, F_{n}(y))) \\ 
&\leq \limsup\limits_{n\rightarrow \infty}d(F_{n}(x), x_{n}) +\limsup\limits_{n\rightarrow \infty}d(x_{n},F_{n}(y)) \\
&\leq 2\epsilon < \mathfrak{c}
\end{align*}
By recurrent expansivity of $F$, we get that $x=y$. Hence the result.  
\end{proof}

\begin{Lemma}
Let $F$ be recurrently expansive with expansive constant $\mathfrak{c}$. For any $x_{0}\in X$ and $\lambda >0$, there exists $N > 0$  such that $d(F_{n}(x_{0}), F_{n}(x)) \leq \mathfrak{c}$ for all $0\leq n\leq N$ implies  $d(x_{0}, x) < \lambda$.
\label{L4.5}
\end{Lemma}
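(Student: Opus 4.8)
The plan is to argue by contradiction and use Mandelkern local compactness to build a point that contradicts recurrent expansivity. Suppose the conclusion fails for some $x_{0}\in X$ and some $\lambda>0$. Then for every $N\in\mathbb{N}^{+}$ there is a point $x_{N}\in X$ with $d(F_{n}(x_{0}),F_{n}(x_{N}))\leq\mathfrak{c}$ for all $0\leq n\leq N$, yet $d(x_{0},x_{N})\geq\lambda$. Taking $n=0$ and recalling that $F_{0}=f_{0}$ is the identity map, we get $d(x_{0},x_{N})=d(F_{0}(x_{0}),F_{0}(x_{N}))\leq\mathfrak{c}<1$ for every $N$, so the sequence $\{x_{N}\}_{N\in\mathbb{N}^{+}}$ is contained in the closed ball $\overline{B}(x_{0},\mathfrak{c})$. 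Since $X$ is Mandelkern locally compact, every closed ball of finite radius is compact, so $\{x_{N}\}$ has a convergent subsequence $x_{N_{k}}\rightarrow x_{*}$; moreover $d(x_{0},x_{*})\geq\lambda>0$, hence $x_{*}\neq x_{0}$.

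Next I would show that $d(F_{n}(x_{0}),F_{n}(x_{*}))\leq\mathfrak{c}$ for every $n\in\mathbb{N}$. Fix such an $n$. For all sufficiently large $k$ we have $N_{k}\geq n$, and therefore $d(F_{n}(x_{0}),F_{n}(x_{N_{k}}))\leq\mathfrak{c}$ by the choice of $x_{N_{k}}$. Each $F_{n}=f_{n}\circ f_{n-1}\circ\cdots\circ f_{1}\circ f_{0}$ is a finite composition of (uniformly) continuous maps, hence continuous, so $F_{n}(x_{N_{k}})\rightarrow F_{n}(x_{*})$ and the inequality $d(F_{n}(x_{0}),F_{n}(x_{*}))\leq\mathfrak{c}$ passes to the limit.

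Finally, this yields $\limsup_{n\rightarrow\infty}d(F_{n}(x_{0}),F_{n}(x_{*}))\leq\mathfrak{c}$, which contradicts recurrent expansivity of $F$ applied to the distinct pair $x_{0},x_{*}$, since that requires the $\limsup$ to be strictly greater than $\mathfrak{c}$. Therefore the asserted $N$ must exist. The only step that needs genuine care is the compactness argument: one must notice that the $n=0$ instance of the hypothesis forces the approximating points into a fixed closed ball, so that the standing Mandelkern local compactness assumption makes that ball compact and permits extracting the limit point $x_{*}$; the remainder is a routine subsequence-limit argument together with continuity of the maps $F_{n}$.
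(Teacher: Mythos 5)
Your proof is correct and follows essentially the same route as the paper: negate the conclusion, use the $n=0$ case to trap the points $x_N$ in the compact ball $B[x_0,\mathfrak{c}]$ (Mandelkern local compactness), pass to a convergent subsequence, and use continuity of each $F_n$ to contradict recurrent expansivity. Your write-up is in fact slightly more careful than the paper's, which leaves the ``why the $x_N$ lie in the ball'' step implicit.
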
 
\begin{proof}
Choose a sequence $\lbrace x_{N}\rbrace_{N\in \mathbb{N}^{+}}$ in $X$ such that $d(F_{n}(x_{0}), F_{n}(x_{N})) \leq \mathfrak{c}$ for all $0\leq n \leq N$ and $d(x_{0}, x_{N}) \geq \lambda$. Since $B[x_{0}, \mathfrak{c}]$ is compact, we can assume that $x_{N}$ converges to $x$, for some $x\in X$. By continuity of each $F_n$, we have $d(F_{n}(x_{0}), F_{n}(x)) \leq \mathfrak{c}$ for all $n \in \mathbb{N}$ and $d(x_{0}, x) \geq \lambda$, a contradiction to the recurrent expansivity of $F$.
\end{proof}

\textbf{Proof of Theorem \ref{T4.3}} 
For $\epsilon\in (0,\frac{\mathfrak{c}}{3})$, choose $\beta\in (0,\epsilon)$ by the equicontinuity of $F$. Let $\delta\in (0,\beta)$ be given for $\beta$ by the ALSP of $F$. Let $G\in NC(X)$ be such that $\gamma(F, G) < \delta$ i.e. $\eta(f_{i}(x), g_{i}(x)) < \delta$ for all $i\in \mathbb{N}$ and all $x\in X$. Thus for all $x\in X$, $\lbrace G_{n}(x)\rbrace_{n\in \mathbb{N}}$ forms a $\delta$-pseudo orbit of $F$. By Lemma \ref{L4.4}, define $h : X\rightarrow X$, where $h(x)$ is a unique almost-$\beta$-tracing point of the $\delta$-pseudo orbit $\lbrace G_{n}(x)\rbrace_{n\in \mathbb{N}}$ i.e. $\limsup\limits_{n\rightarrow \infty} d(F_{n}(h(x)), G_{n}(x))<\beta$ for all $x\in X$, where $d(h(x), x) < \epsilon$. Note that, for all $x\in X$ and all $i\in \mathbb{N}$, we have
\begin{align*}
\limsup\limits_{n\rightarrow \infty}d(F_{n}(f_{i}(h(x))), F_{n}(h(g_{i}(x)))) &\leq \limsup\limits_{n\rightarrow \infty}d(f_{i}F_{n}(h(x)), f_{i}G_{n}(x)) \\
&+ \limsup\limits_{n\rightarrow \infty}d(f_{i}G_{n}(x), g_{i}G_{n}(x))\\
&+ \limsup\limits_{n\rightarrow \infty}d( G_{n}(g_{i}(x)), F_{n}h(g_{i}(x)))\\
&< 3\epsilon < \mathfrak{c}
\end{align*} 
Hence by the recurrent expansivity of $F$, $f_{i}\circ h(x) = h\circ g_{i}(x)$ for all $i\in \mathbb{N}$. Hence, $F_{n}\circ h = h\circ G_{n}$ for all $n\in \mathbb{N}$.
\medskip

Now we show that $h$ is a continuous map. Let $x_0\in X$ and $\lambda > 0$. By Lemma \ref{L4.5}, there exists $N > 0$ such that for any $y\in X$, $d(F_{n}h(x_{0}), F_{n}h(y)) \leq \mathfrak{c}$ for all $n\leq N$ implies $d(h(x_{0}), h(y))  < \lambda$. Choose $\alpha > 0$ such that $d(x_{0}, y) < \alpha$ implies $d(G_{n}(x_{0}), G_{n}(y)) < \frac{\mathfrak{c}}{3}$ for all $n \leq N$ and all $y\in X$. Therefore, $d(x_{0}, y) < \alpha$ implies that for all $n \leq N$ and all $y\in X$, 
\begin{align*} 
d(F_{n}h(x_{0}), F_{n}h(y)) = d(hG_{n}(x_{0}), hG_{n}(y)) &\leq d(hG_{n}(x_{0}), G_{n}(x_{0})) 
\\
&+ d(G_{n}(x_{0}), G_{n}(y)) 
\\
&+ d(G_{n}(y), hG_{n}(y)) 
\\
&< \epsilon + \frac{\mathfrak{c}}{3} + \epsilon < \mathfrak{c} 
\end{align*}
Thus for all $y\in X$, we get that $d(h(x_{0}), h(y)) < \lambda$, whenever $d(x_{0}, y) < \alpha$ i.e. $h$ is continuous at $x_{0}$. Hence, $h$ is a continuous map.
\medskip

Assume that there exists another continuous map $h':X \rightarrow X$ such that $F_{n}\circ h' = h'\circ F_{n}$ for all $n\in \mathbb{N}$ and $d(h'(x), x) < \epsilon$ for all $x\in X$. 
Thus for all $n\in \mathbb{N}$ and all $x\in X$ 
\begin{align*}
d(F_{n}(h(x)), F_{n}(h'(x)))&\leq d(F_{n}(h(x)), G_{n}(x)) + d(G_{n}(x), F_{n}(h'(x)))\\
&= d(h(G_{n}(x)), G_{n}(x)) + d(G_{n}(x), h'(G_{n}(x))) \\
&< 2\epsilon < \mathfrak{c}
\end{align*}

Hence by recurrent expansivity of $F$, we have $h(x) = h'(x)$ for all $x\in X$. 
\medskip

Now assume that $h(x) = h(y)$. Since for each $n\in\mathbb{N}$  
\begin{align*}
d(G_{n}(x), G_{n}(y))& \leq d(G_{n}(x), h(G_{n}(x))) + d(hG_{n}(x), hG_{n}(y)) \\
&\hspace*{0.5cm} +  d(h(G_{n}(y)), G_{n}(y))\\ 
&< \epsilon + 0 + \epsilon \\
&= 2\epsilon < \mathfrak{c'}
\end{align*}
therefore by the expansivity of $G$ we get that $x = y$. 

\begin{theorem}
Let $F\in NC(X)$ be equicontinuous and expansive with expansive constant $\mathfrak{c}$. If $F$ has shadowing property, then $F$ is topologically stable. Moreover, for every $\epsilon\in (0,\frac{\mathfrak{c}}{3})$ there is $\delta > 0$ such that if $G\in NC(X)$ satisfies $\gamma(F, G) < \delta$, then there is a unique continuous map $h : X\rightarrow X$ such that $F_{n}\circ h = h\circ G_{n}$ for all $n\in \mathbb{N}$ and $d(h(x), x) < \epsilon$ for all $x\in X$. In addition, if $G$ is expansive with expansive constant $\mathfrak{c'}\geq 3\epsilon$, then $h$ is injective.  
\label{T4.6}
\end{theorem}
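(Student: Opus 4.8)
The plan is to follow the proof of Theorem \ref{T4.3} essentially verbatim, with the shadowing property playing the role of ALSP and expansivity the role of recurrent expansivity; the simplification is that shadowing controls the entire orbit rather than only its asymptotic behaviour, so every $\limsup$ in that argument becomes a bound valid for all $n$ and no averaging intervenes.

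First I would record the two auxiliary facts on which the argument rests. The analogue of Lemma \ref{L4.4}: if $2\epsilon<\mathfrak{c}$ and $\delta$ is the shadowing constant for $\epsilon$, then every $\delta$-pseudo orbit of $F$ is $\epsilon$-shadowed by exactly one point, the uniqueness coming from the fact that two $\epsilon$-shadowing points $x,y$ satisfy $d(F_n(x),F_n(y))\le 2\epsilon<\mathfrak{c}$ for all $n$, hence $x=y$ by expansivity. The analogue of Lemma \ref{L4.5} holds with its proof unchanged: that proof only uses plain expansivity (it produces a point $x\ne x_0$ with $d(F_n(x_0),F_n(x))\le\mathfrak{c}$ for all $n$, which already contradicts expansivity) together with compactness of the closed ball $B[x_0,\mathfrak{c}]$, available by Mandelkern local compactness.

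The construction of $h$ then mirrors Theorem \ref{T4.3}. Given $\epsilon\in(0,\mathfrak{c}/3)$, choose $\beta\in(0,\epsilon)$ by equicontinuity of $F$ and let $\delta\in(0,\beta)$ be the shadowing constant for $\beta$. If $\gamma(F,G)<\delta$, then for each $x$ the orbit $\{G_n(x)\}_{n}$ is a $\delta$-pseudo orbit of $F$, and I define $h(x)$ to be its unique $\beta$-shadowing point, so $d(F_n(h(x)),G_n(x))<\beta$ for all $n$ and, taking $n=0$, $d(h(x),x)<\beta<\epsilon$. To check $f_i\circ h=h\circ g_i$ one estimates $d(F_n(f_i(h(x))),F_n(h(g_i(x))))$ by inserting $f_iG_n(x)$ and $g_iG_n(x)=G_n(g_i(x))$, where the identity $F_n\circ f_i=f_i\circ F_n$ uses commutativity of $F$ and $g_i\circ G_n=G_n\circ g_i$ uses commutativity of $G$ (both $F,G\in NC(X)$); the three resulting terms are bounded by $\epsilon$ (equicontinuity applied to the shadowing inequality for $x$), $\delta<\epsilon$ (since $\gamma(F,G)<\delta$), and $\beta<\epsilon$ (shadowing inequality for $g_i(x)$), so the sum is $<3\epsilon<\mathfrak{c}$ for every $n$, and expansivity of $F$ gives $f_i(h(x))=h(g_i(x))$, whence $F_n\circ h=h\circ G_n$ for all $n$. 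Continuity of $h$ at $x_0$ follows as before: pick $N$ from the Lemma \ref{L4.5} analogue for $h(x_0)$ and a given $\lambda$, then $\alpha>0$ with $d(x_0,y)<\alpha\Rightarrow d(G_n(x_0),G_n(y))<\mathfrak{c}/3$ for all $n\le N$ (only finitely many continuous $G_n$ are involved), and conclude $d(F_nh(x_0),F_nh(y))=d(hG_n(x_0),hG_n(y))<\epsilon+\mathfrak{c}/3+\epsilon<\mathfrak{c}$ for $n\le N$, hence $d(h(x_0),h(y))<\lambda$. Uniqueness: any $h'$ with $F_n\circ h'=h'\circ G_n$ and $d(h'(x),x)<\epsilon$ satisfies $d(F_n(h(x)),F_n(h'(x)))\le d(hG_n(x),G_n(x))+d(G_n(x),h'G_n(x))<2\epsilon<\mathfrak{c}$, so $h=h'$ by expansivity. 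For injectivity, if $G$ is expansive with constant $\mathfrak{c}'\ge 3\epsilon$ and $h(x)=h(y)$, then $hG_n(x)=F_nh(x)=F_nh(y)=hG_n(y)$, so $d(G_n(x),G_n(y))\le d(G_n(x),hG_n(x))+d(hG_n(y),G_n(y))<2\epsilon<\mathfrak{c}'$ for all $n$, and expansivity of $G$ forces $x=y$.

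I do not expect a genuinely hard step here: the whole argument is a transcription of Theorem \ref{T4.3}, so the only points requiring care are clerical — keeping the constants nested as $\delta<\beta<\epsilon<\mathfrak{c}/3$, recording precisely where commutativity of $F$ and of $G$ enters the conjugacy computation, and confirming that the two lemmas behind Theorem \ref{T4.3} survive the weakening of recurrent expansivity to expansivity (they do, since expansivity is exactly the hypothesis their proofs invoke).
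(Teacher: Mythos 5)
Your proposal is correct and is essentially the proof the paper intends: the paper's own proof of Theorem \ref{T4.6} is just the remark that it is similar to Theorem \ref{T4.3}, and your write-up carries out exactly that adaptation, correctly noting that the analogues of Lemmas \ref{L4.4} and \ref{L4.5} only need plain expansivity and that every $\limsup$ bound becomes a bound for all $n$.
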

\begin{proof}
Proof is similar to the proof of Theorem \ref{T4.3}
\end{proof}

\begin{theorem}
Let $F\in NC(X)$ be mean equicontinuous and mean expansive with expansive constant $\mathfrak{c}$. If $F$ has SASP, then $F$ is topologically stable. Moreover, for every $\epsilon\in (0,\frac{\mathfrak{c}}{3})$ there is $\delta > 0$ such that if $G\in NC(X)$ satisfies $\gamma(F, G) < \delta$, then there is a unique continuous map $h : X \rightarrow X$ such that $F_{n}\circ h=h\circ G_{n}$ for all $n\in \mathbb{N}$ and $d(h(x), x)< \epsilon$ for all $x\in X$. In addition, if $G$ is mean expansive with expansive constant $\mathfrak{c'}\geq 3\epsilon$, then $h$ is injective.  
\label{T4.7}
\end{theorem}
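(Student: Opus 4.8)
The plan is to run the proof of Theorem~\ref{T4.3} almost verbatim, under the dictionary: recurrent expansivity becomes mean expansivity, equicontinuity becomes mean equicontinuity, ALSP becomes SASP, $\delta$-pseudo orbit becomes $\delta$-average pseudo orbit, ``almost $\epsilon$-shadowed'' becomes ``strongly $\epsilon$-shadowed in average'', and each $\limsup_{n}d(F_{n}(\cdot),\cdot)$ is replaced by $\limsup_{n}\frac1n\sum_{i=0}^{n-1}d(F_{i}(\cdot),\cdot)$. Two observations make the transcription legitimate. First, mean expansivity with constant $\mathfrak{c}$ implies recurrent expansivity with the same constant: if $\limsup_{n}d(F_{n}(x),F_{n}(y))\le s$ then $\limsup_{n}\frac1n\sum_{i<n}d(F_{i}(x),F_{i}(y))\le s$, so whenever the average version exceeds $\mathfrak{c}$ so does $\limsup_{n}d(F_{n}(x),F_{n}(y))$; hence Lemma~\ref{L4.5} is available for $F$ exactly as stated. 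Second, for $G\in NC(X)$ with $\gamma(F,G)<\delta$ the sequence $\{G_{n}(x)\}_{n}$ is a $\delta$-average pseudo orbit of $F$ with $N_{\delta}=1$, since $d(f_{i+1}(G_{i}(x)),G_{i+1}(x))=d(f_{i+1}(G_{i}(x)),g_{i+1}(G_{i}(x)))<\delta$ for every $i$.

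First I would prove the mean analogue of Lemma~\ref{L4.4}: for $\epsilon\in(0,\frac{\mathfrak{c}}{3})$ and $\delta$ given by SASP for $\epsilon$, every $\delta$-average pseudo orbit is strongly $\epsilon$-shadowed in average by exactly one point. Existence is SASP; for uniqueness, if $x$ and $y$ both strongly $\epsilon$-shadow $\{x_{n}\}$ in average, then the triangle inequality and subadditivity of $\limsup$ give $\limsup_{n}\frac1n\sum_{i<n}d(F_{i}(x),F_{i}(y))\le 2\epsilon<\mathfrak{c}$, so $x=y$ by mean expansivity.

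Next the construction. Given $\epsilon\in(0,\frac{\mathfrak{c}}{3})$, choose $\beta\in(0,\epsilon)$ by mean equicontinuity of $F$ and $\delta\in(0,\beta)$ by SASP, and for $G\in NC(X)$ with $\gamma(F,G)<\delta$ let $h(x)$ be the unique point strongly $\beta$-shadowing $\{G_{n}(x)\}_{n}$ in average; then $d(h(x),x)<\beta<\epsilon$ is part of the definition. For the semiconjugacy, use commutativity in the form $F_{j}\circ f_{i}=f_{i}\circ F_{j}$ and $G_{j}\circ g_{i}=g_{i}\circ G_{j}$, and bound $\limsup_{n}\frac1n\sum_{j<n}d(F_{j}(f_{i}h(x)),F_{j}(hg_{i}x))$ by the three terms $\limsup_{n}\frac1n\sum_{j}d(f_{i}F_{j}h(x),f_{i}G_{j}x)$, $\limsup_{n}\frac1n\sum_{j}d(f_{i}G_{j}x,g_{i}G_{j}x)$ and $\limsup_{n}\frac1n\sum_{j}d(G_{j}(g_{i}x),F_{j}(hg_{i}x))$. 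The first is $<\epsilon$ by mean equicontinuity of $F$ applied to the sequences $\{F_{j}h(x)\}_{j}$ and $\{G_{j}x\}_{j}$, whose average of distances has $\limsup<\beta$; the second is $\le\delta<\epsilon$ since $d(f_{i}(z),g_{i}(z))<\delta$ for all $z$; the third is $<\beta<\epsilon$ by the choice of $h(g_{i}x)$. The total is $<3\epsilon<\mathfrak{c}$, so mean expansivity gives $f_{i}\circ h=h\circ g_{i}$ for all $i$, hence $F_{n}\circ h=h\circ G_{n}$ for all $n$. Continuity of $h$ then follows, just as in Theorem~\ref{T4.3}, from Lemma~\ref{L4.5} applied to $h(x_{0})$ together with the per-coordinate estimate $d(F_{n}h(x_{0}),F_{n}h(y))=d(hG_{n}(x_{0}),hG_{n}(y))\le d(hG_{n}(x_{0}),G_{n}(x_{0}))+d(G_{n}(x_{0}),G_{n}(y))+d(G_{n}(y),hG_{n}(y))<\mathfrak{c}$ for $n\le N$, which holds once $\alpha$ is chosen small using continuity of the finitely many maps $G_{0},\dots,G_{N-1}$; uniqueness of $h$, and injectivity of $h$ when $G$ is mean expansive with $\mathfrak{c}'\ge 3\epsilon$, follow from mean expansivity of $F$ and of $G$ respectively, applied to averages bounded by $2\epsilon$.

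The only point requiring genuine care is the first of the three terms in the semiconjugacy estimate: one must pass from $\limsup_{n}\frac1n\sum_{j}d(F_{j}h(x),G_{j}x)<\beta$ to $\limsup_{n}\frac1n\sum_{j}d(f_{i}F_{j}h(x),f_{i}G_{j}x)<\epsilon$ via mean equicontinuity. This works either by reading Definition~\ref{D2.1} in the robust form ``$\limsup$ of the averaged input distance $<\delta$ implies $\limsup$ of the averaged output distance $<\epsilon$'', or by a short argument truncating the finitely many initial indices at which the running average may be large. I expect no essential new difficulty beyond this bookkeeping; everything else is a faithful restatement of the proof of Theorem~\ref{T4.3}, with averages of distances in place of single distances.
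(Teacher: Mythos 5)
Your proposal is correct, and apart from one step it is the paper's own argument: the paper likewise proves the uniqueness lemma you state (its Lemma \ref{L4.8}), defines $h(x)$ as the unique strong $\beta$-tracing point in average of $\lbrace G_{n}(x)\rbrace$, gets the semiconjugacy from the same three-term averaged estimate plus mean expansivity, and proves uniqueness and injectivity exactly as you do. The genuine divergence is the continuity step. The paper proves a mean analogue of Lemma \ref{L4.5}, namely Lemma \ref{L4.9}: for each $x_{0}$ and $\lambda>0$ there is $N$ such that $\frac{1}{n}\sum_{i=0}^{n-1}d(F_{i}(x_{0}),F_{i}(x))\leq\mathfrak{c}$ for all $n\leq N$ forces $d(x_{0},x)<\lambda$ (same compactness argument in $B[x_{0},\mathfrak{c}]$, with an extra limiting estimate for the averages), and then runs the three-term continuity bound in averaged form. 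You instead note that mean expansivity implies recurrent expansivity with the same constant, via the Ces\`aro inequality $\limsup_{n}\frac{1}{n}\sum_{i<n}a_{i}\leq\limsup_{n}a_{n}$ for nonnegative $a_{i}$, so Lemma \ref{L4.5} applies verbatim, and you use the pointwise estimate $d(F_{n}h(x_{0}),F_{n}h(y))\leq d(hG_{n}(x_{0}),G_{n}(x_{0}))+d(G_{n}(x_{0}),G_{n}(y))+d(G_{n}(y),hG_{n}(y))<\mathfrak{c}$, which only needs $d(h(z),z)<\epsilon$ for all $z$ and continuity of $G_{0},\dots,G_{N}$. Both routes are valid; yours is slightly more economical (no second lemma), while the paper's keeps every estimate in the mean framework that the statement suggests. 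Two of your side remarks are actually improvements in care over the printed proof: applying Lemma \ref{L4.5} at the base point $h(x_{0})$ rather than $x_{0}$, and the observation that Definition \ref{D2.1} must be read in limsup form (or combined with a truncation of finitely many initial indices) before it can be applied to sequences whose averaged distance is only eventually below $\beta$ --- a quantifier point the paper passes over silently.
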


\begin{Lemma}
Let $F$ be mean expansive with expansive constant $\mathfrak{c}$. Let $\delta > 0$ be given for $\epsilon\in (0,\frac{\mathfrak{c}}{3})$ by SASP of $F$. Then every $\delta$-average pseudo orbit of $F$ can be strongly $\epsilon$-shadowed in average uniquely. 
\label{L4.8}
\end{Lemma}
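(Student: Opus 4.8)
The plan is to split the statement into existence and uniqueness. Existence is nothing more than unwinding Definition~\ref{D2.7}(ii): since $\delta$ was chosen for $\epsilon$ by SASP of $F$, any $\delta$-average pseudo orbit $\{x_n\}_{n\in\mathbb{N}}$ of $F$ is strongly $\epsilon$-shadowed in average by some $z\in X$, i.e.\ $d(z,x_0)<\epsilon$ and $\limsup_{n\to\infty}\frac1n\sum_{i=0}^{n-1}d(F_i(z),x_i)<\epsilon$. So the real content of the lemma is that such a $z$ is unique.

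For uniqueness I would argue exactly as in the proof of Lemma~\ref{L4.4}, with Ces\`aro averages in place of the plain $\limsup$ of distances. Suppose $x,y\in X$ both strongly $\epsilon$-shadow the given $\delta$-average pseudo orbit $\{x_n\}_{n\in\mathbb{N}}$ in average. The triangle inequality gives $d(F_i(x),F_i(y))\le d(F_i(x),x_i)+d(x_i,F_i(y))$ for every $i\in\mathbb{N}$; averaging over $0\le i\le n-1$, letting $n\to\infty$, and using subadditivity of $\limsup$ yields
\begin{align*}
\limsup_{n\to\infty}\frac1n\sum_{i=0}^{n-1}d(F_i(x),F_i(y)) &\le \limsup_{n\to\infty}\frac1n\sum_{i=0}^{n-1}d(F_i(x),x_i) + \limsup_{n\to\infty}\frac1n\sum_{i=0}^{n-1}d(x_i,F_i(y)) \\
&< 2\epsilon < \mathfrak{c}.
\end{align*}
Mean expansivity of $F$ (Definition~\ref{D2.5}(ii)) then forces $x=y$, which is the desired uniqueness.

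I do not anticipate a genuine obstacle. The only point deserving a little care is that $\limsup$ is merely subadditive, so the bound produced is $2\epsilon$ rather than anything smaller; this is precisely why the hypothesis $\epsilon\in(0,\frac{\mathfrak{c}}{3})$ (in particular $\epsilon<\frac{\mathfrak{c}}{2}$) is exactly what is needed to conclude $2\epsilon<\mathfrak{c}$ and apply mean expansivity. Combining the two paragraphs gives the claimed unique strong $\epsilon$-shadowing in average, and this lemma is then the analogue of Lemma~\ref{L4.4} that will feed into the proof of Theorem~\ref{T4.7} in the same way Lemma~\ref{L4.4} fed into Theorem~\ref{T4.3}.
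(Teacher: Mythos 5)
Your proposal is correct and follows essentially the same argument as the paper: existence is immediate from the choice of $\delta$ by SASP, and uniqueness comes from the triangle inequality, subadditivity of $\limsup$ of the Ces\`aro averages to get the bound $2\epsilon<\mathfrak{c}$, and then mean expansivity of $F$. No gaps.
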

\begin{proof}
Let $\lbrace x_{n}\rbrace_{n\in\mathbb{N}}$ be a $\delta$-average pseudo orbit for $F$ and let it be strongly $\epsilon$-shadowed in average by $x,y\in X$. Then $d(F_{n}(x), F_{n}(y)) \leq d(F_{n}(x), x_{n}) + d(x_{n}, F_{n}(y))$ for all $n\in \mathbb{N}$. This implies that
\begin{align*}
\limsup\limits_{n\rightarrow \infty}\frac{1}{n}\sum_{i=0}^{n-1}d(F_{i}(x), F_{i}(y)) &\leq \limsup\limits_{n\rightarrow \infty}\frac{1}{n}\sum_{i=0}^{n-1}(d(F_{i}(x), x_{i}) + d(x_{i}, F_{i}(y))) \\
&\leq \limsup\limits_{n\rightarrow \infty}\frac{1}{n}\sum_{i=0}^{n-1}d(F_{i}(x), x_{i}) +\limsup\limits_{n\rightarrow \infty}\frac{1}{n}\sum_{i=0}^{n-1}d(x_{i},F_{i}(y)) \\ 
&< 2\epsilon < \mathfrak{c}
\end{align*}
By mean expansivity of $F$, we get that $x = y$. 
\end{proof}

\begin{Lemma}
Let $F$ be mean expansive with expansive constant $\mathfrak{c}$. For any $x_{0}\in X$ and $\lambda >0$, there exists $N > 0$  such that if $\frac{1}{n}\sum_{i=0}^{n-1}d(F_{i}(x_{0}), F_{i}(x)) \leq \mathfrak{c}$ for all $n\leq N$, then $d(x_{0}, x) < \lambda$ for all $x\in X$. 
\label{L4.9}
\end{Lemma}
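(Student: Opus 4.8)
The plan is to argue by contradiction, imitating the proof of Lemma~\ref{L4.5} with the pointwise orbit distance replaced by its Cesàro average. Suppose the conclusion fails for some $x_0\in X$ and $\lambda>0$. Then for every $N\in\mathbb{N}^+$ there is a point $x_N\in X$ with $\frac{1}{n}\sum_{i=0}^{n-1}d(F_i(x_0),F_i(x_N))\leq\mathfrak{c}$ for all $n\leq N$, and yet $d(x_0,x_N)\geq\lambda$.

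The first point is to extract compactness, and this is given to us for free: taking $n=1$ in the hypothesis yields $d(x_0,x_N)=d(F_0(x_0),F_0(x_N))\leq\mathfrak{c}$, so every $x_N$ lies in the closed ball $B[x_0,\mathfrak{c}]$, which is compact because $X$ is Mandelkern locally compact. Passing to a subsequence, I may therefore assume $x_N\to x$ for some $x\in X$.

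Now I would pass to the limit. Fix $n\in\mathbb{N}^+$; for every $N\geq n$ we have $\frac{1}{n}\sum_{i=0}^{n-1}d(F_i(x_0),F_i(x_N))\leq\mathfrak{c}$, and each $F_i$ is continuous, so letting $N\to\infty$ along the subsequence gives $\frac{1}{n}\sum_{i=0}^{n-1}d(F_i(x_0),F_i(x))\leq\mathfrak{c}$. Since $n$ was arbitrary, $\limsup_{n\to\infty}\frac{1}{n}\sum_{i=0}^{n-1}d(F_i(x_0),F_i(x))\leq\mathfrak{c}$. On the other hand $d(x_0,x)=\lim_N d(x_0,x_N)\geq\lambda>0$, so $x\neq x_0$. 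This contradicts the mean expansivity of $F$, and the lemma follows. I do not anticipate a genuine obstacle here; the only step requiring a moment's care is the compactness of the set of candidate points, which, as noted, is immediate from evaluating the averaged hypothesis at $n=1$, after which the argument is a routine limit computation using the finiteness of each partial sum and the continuity of the $F_i$.
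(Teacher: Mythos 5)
Your proof is correct and follows essentially the same route as the paper: negate the statement, use compactness of the closed ball $B[x_{0},\mathfrak{c}]$ (guaranteed by Mandelkern local compactness) to extract a convergent subsequence, pass to the limit in each fixed finite average by continuity of the $F_{i}$, and contradict mean expansivity. Your explicit remark that the $n=1$ case of the hypothesis places every $x_{N}$ in $B[x_{0},\mathfrak{c}]$ is a nice clarification of a step the paper leaves implicit, but the argument is otherwise the same.
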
 
\begin{proof}
Choose a sequence $\lbrace x_{N}\rbrace_{N\in \mathbb{N}^{+}}$ in $X$ such that $\frac{1}{n}\sum_{i=0}^{n-1}d(F_{i}(x_{0}), F_{i}(x_{N})) \leq \mathfrak{c}$ for all $n\leq N$ and $d(x_{0}, x_{N}) \geq \lambda$. Since $B[x_{0}, \mathfrak{c}]$ is compact, we can assume that $x_{N}$ converges to $x\in X$. For each $n\in \mathbb{N}$ and $\epsilon > 0$ there exists $N'\geq n$ such that $d(F_{i}(x), F_{i}(x_{N})) < \epsilon$ for all $0\leq i\leq n$ and all $N \geq N'$. For such $N'$, $\frac{1}{n}\sum_{i=0}^{n-1}d(F_{i}(x), F_{i}(x_{N'})) < \epsilon$ and hence,
\begin{align*}
\frac{1}{n}\sum_{i=0}^{n-1}d(F_{i}(x_{0}), F_{i}(x)) &\leq \frac{1}{n}\sum_{i=0}^{n-1}d(F_{i}(x_{0}), F_{i}(x_{N'}))+\frac{1}{n}\sum_{i=0}^{n-1}d(F_{i}(x_{N'}), F_{i}(x)) \leq \mathfrak{c} + \epsilon
\end{align*}
Since $n$ and $\epsilon$ was chosen arbitrary, we get that $\limsup\limits_{n\rightarrow \infty}\frac{1}{n}\sum_{i=0}^{n-1}d(F_{i}(x_{0}), F_{i}(x)) \leq \mathfrak{c}$ and $d(x_{0}, x) \geq \lambda$, a contradiction to the mean expansivity of $F$. 
\end{proof}

\textbf{Proof of Theorem \ref{T4.7}} 
Let $\beta\in (0,\epsilon)$ be given for $\epsilon \in (0,\frac{\mathfrak{c}}{3})$ by mean equicontinuity of $F$. For this $\beta$, choose $\delta\in (0,\beta)$ such that every $\delta$-average pseudo orbit of $F$ is strongly $\beta$-shadowed in average. Let $G\in NC(X)$ be such that $\gamma(F, G) < \delta$ i.e. $\eta(f_{i}(x), g_{i}(x)) < \delta$ for all $x\in X$ and all $i\in \mathbb{N}$. Thus for all $x\in X$, the sequence $\lbrace G_{n}(x)\rbrace_{n\in \mathbb{N}}$ forms a $\delta$-average pseudo orbit for $F$. Define a map $h : X\rightarrow X$, where $h(x)$ is a unique strongly $\beta$-tracing point in average of the $\delta$-average pseudo orbit $\lbrace G_{n}(x)\rbrace_{n\in \mathbb{N}}$ i.e. $\limsup\limits_{n\rightarrow \infty} \frac{1}{n}\sum_{i=0}^{n-1}d(F_{i}(h(x)), G_{i}(x)) < \beta$ for all $x\in X$ and $d(h(x), x) < \beta$ for all $x\in X$. Note that, for all $x\in X$ and all $m \in \mathbb{N}$,

\begin{align*}
\limsup\limits_{n\rightarrow \infty}\frac{1}{n}\sum_{i=0}^{n-1}d(F_{i}(f_{m}(h(x))), F_{i}(h(g_{m}(x)))) &\leq \limsup\limits_{n\rightarrow \infty}\frac{1}{n}\sum_{i=0}^{n-1}d(f_{m}F_{i}(h(x)), f_{m}G_{i}(x)) \\
&+ \limsup\limits_{n\rightarrow \infty}\frac{1}{n}\sum_{i=0}^{n-1}d(f_{m}G_{i}(x), g_{m}G_{i}(x)) \\
&+ \limsup\limits_{n\rightarrow \infty}\frac{1}{n}\sum_{i=0}^{n-1}d(G_{i}(g_{m}(x)), F_{i}(h(g_{m}(x)))) \\
&< 3\epsilon < \mathfrak{c}
\end{align*}

By mean expansivity of $F$, we get that $f_{m}\circ h=h\circ g_{m}$ for all $m\in \mathbb{N}$ and hence, we have $F_{n}\circ h = h\circ G_{n}$ for all $n\in \mathbb{N}$. 
\medskip

We now show that $h$ is continuous. Let $x_{0}\in X$ and $\lambda > 0$. By Lemma \ref{L4.9}, there exists $N > 0$ such that for any $y\in X$, $\frac{1}{n}\sum_{i=0}^{n-1}d(F_{i}(x_{0}), F_{i}(y)) \leq \mathfrak{c}$ for all $n\leq N$ implies $d(x_{0}, y)  < \lambda$. 
Choose $\alpha > 0$ such that $d(x_{0}, y) < \alpha$ implies $d(G_{n}(x_{0}), G_{n}(y)) < \frac{\mathfrak{c}}{3}$ for all $n \leq N$ and all $y\in X$. 
Therefore, $d(x_{0}, y) < \alpha$ implies that for all $n \leq N$ and all $y\in X$,
\begin{align*}
\frac{1}{n}\sum_{i=0}^{n-1}d(F_{i}h(x_{0}), F_{i}h(y)) &= \frac{1}{n}\sum_{i=0}^{n-1}d(hG_{i}  (x_{0}), hG_{i}(y))\\ 
&\leq \frac{1}{n}\sum_{i=0}^{n-1}d(hG_{i}(x_{0}), G_{i}(x_{0})) + \frac{1}{n}\sum_{i=0}^{n-1}d(G_{i}(x_{0}), G_{i}(y))\\ 
&\hspace*{0.5cm}+ \frac{1}{n}\sum_{i=0}^{n-1}d(G_{i}(y), hG_{i}(y))\\ 
&< \mathfrak{c} 
\end{align*}
Thus $d(h(x_{0}), h(y)) < \lambda$ whenever $d(x_{0}, y) < \alpha$, for all $y\in X$ i.e. $h$ is continuous at $x_{0}$. So, $h$ is a continuous map.
\medskip

Assume that there exists another continuous map $h':X \rightarrow X$ satisfying $F_{n}\circ h' = h'\circ G_{n}$ for all $n\in \mathbb{N}$ and $d(h'(x), x) < \epsilon$ for all $x\in X$. 
Thus for all $n\in \mathbb{N}^{+}$ and all $x\in X$, 
\begin{align*}
\frac{1}{n}\sum_{i=0}^{n-1}d(F_{i}(h(x)), F_{i}(h'(x))) &\leq \frac{1}{n}\sum_{i=0}^{n-1}d(F_{i}(h(x)), G_{i}(x)) +\frac{1}{n}\sum_{i=0}^{n-1}d(G_{i}(x), G_{i}(h'(x)))\\ 
&= \frac{1}{n}\sum_{i=0}^{n-1}d(h(G_{i}(x)), G_{i}(x)) + \frac{1}{n}\sum_{i=0}^{n-1}d(G_{i}(x), h'(G_{i}(x))) \\
&< 2\epsilon < \mathfrak{c} 
\end{align*}
Hence by the mean expansivity of $F$, $h(x) = h'(x)$ for all $x\in X$. 
\medskip

Now assume that $h(x) = h(y)$. Since for each $n\in\mathbb{N}$, 
\begin{align*}
d(G_{n}(x), G_{n}(y))\leq d(G_{n}(x), h(G_{n}(x))) + d(hG_{n}(x), hG_{n}(y))+d(h(G_{n}(y)), G_{n}(y))< \mathfrak{c'}
\end{align*}
therefore by the expansivity of $G$ we get that $x = y$.

\begin{Corollary}
Every commutative equicontinuous mean expansive NAS with ALSP on a Mandelkern locally compact metric space is topologically stable.
\label{C4.11} 
\end{Corollary}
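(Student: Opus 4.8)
The plan is to deduce the corollary from Theorem \ref{T4.3}; the only thing that needs checking is that, for a non-autonomous system, mean expansivity is a stronger property than recurrent expansivity.

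First I would establish the following claim: if $F$ is mean expansive with expansive constant $\mathfrak{c}$, then $F$ is recurrently expansive with expansive constant $\mathfrak{c}/2$ (which lies in $(0,1)$, as required). Suppose not; then there is a distinct pair $x, y \in X$ with $\limsup_{n\to\infty} d(F_n(x), F_n(y)) \le \mathfrak{c}/2$. Applying this with error $\mathfrak{c}/4$, pick $M \in \mathbb{N}$ so that $d(F_n(x), F_n(y)) < \tfrac{3\mathfrak{c}}{4}$ for all $n \ge M$. Then for every $n > M$,
\[
\frac{1}{n}\sum_{i=0}^{n-1} d(F_i(x), F_i(y)) \;\le\; \frac{1}{n}\sum_{i=0}^{M-1} d(F_i(x), F_i(y)) \;+\; \frac{n-M}{n}\cdot\frac{3\mathfrak{c}}{4}.
\]
The first summand is a fixed finite quantity divided by $n$, hence tends to $0$, while the second is at most $\tfrac{3\mathfrak{c}}{4}$; thus $\limsup_{n\to\infty} \frac{1}{n}\sum_{i=0}^{n-1} d(F_i(x), F_i(y)) \le \tfrac{3\mathfrak{c}}{4} < \mathfrak{c}$, contradicting mean expansivity of $F$. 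This proves the claim.

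With the claim in hand the corollary follows at once: $F$ is commutative, so $F \in NC(X)$, and $F$ is equicontinuous, recurrently expansive and has ALSP, so Theorem \ref{T4.3} yields that $F$ is topologically stable. I do not expect a genuine obstacle here; the one point requiring a little care is the $\limsup$ estimate above, and in particular the observation that one cannot instead route through Theorem \ref{T4.7} — the implication ``equicontinuous $\Rightarrow$ mean equicontinuous'' fails on unbounded Mandelkern locally compact spaces (e.g.\ $\mathbb{R}$ with the usual metric), so Theorem \ref{T4.3}, not Theorem \ref{T4.7}, is the correct tool.
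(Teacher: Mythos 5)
Your proof is correct and follows the route the paper intends: the corollary is stated without a written proof, and the natural derivation is precisely yours — mean expansivity implies recurrent expansivity via the Ces\`aro estimate (with the same or any smaller constant in $(0,1)$), after which Theorem \ref{T4.3} applies verbatim to a commutative equicontinuous system with ALSP. One small correction to your closing aside: the decisive reason Theorem \ref{T4.7} is unavailable is that the corollary assumes ALSP rather than SASP, and in fact on $\mathbb{R}$ with the usual metric an equicontinuous family \emph{is} mean equicontinuous (by metric convexity one gets $d(f_i(x),f_i(y))\leq \epsilon\bigl(d(x,y)/\delta+1\bigr)$ uniformly in $i$, which yields the mean estimate), though this side remark does not affect the validity of your argument.
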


\begin{Corollary}\cite[Theorem 4]{WO}
Every autonomous expansive system with shadowing property on a compact metric space is topologically stable.
\label{C4.12}
\end{Corollary}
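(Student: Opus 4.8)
The plan is to obtain this as the autonomous, compact specialization of Theorem \ref{T4.6}. So the whole task is to check that, for an autonomous system, every hypothesis and conclusion appearing in Theorem \ref{T4.6} reduces to the corresponding classical notion.

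First I would record the reductions. A compact metric space is trivially Mandelkern locally compact, so the standing assumption of this section holds. Let $f$ be a positively expansive continuous map with the shadowing property on a compact metric space $X$ (this is the meaning of ``autonomous expansive system with shadowing property'', by the discussion preceding Definition \ref{D2.5} together with the definition of shadowing for a NAS) and put $F=\langle f\rangle$. Since $f_i=f$ for every $i$, the family $\{f_i\}_{i\in\mathbb N^+}$ is equicontinuous (uniform continuity of the single map $f$ on compact $X$), and $f_i\circ f_j=f_j\circ f_i$ trivially, so $F\in NC(X)$ is equicontinuous. As $F_n=f^n$ for all $n\in\mathbb N$, a $\delta$-pseudo orbit of $F$ is precisely a classical $\delta$-pseudo orbit of $f$, and a point $\epsilon$-shadows it through $F$ precisely when it does so classically; hence $F$ has the shadowing property of Section 2. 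Similarly $d(F_n(x),F_n(y))=d(f^n(x),f^n(y))$, so positive expansivity of $f$ with constant $c$ gives expansivity of $F$ with constant $\mathfrak c=\min\{c,1/2\}\in(0,1)$, as required by the normalization of this section.

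Next I would invoke Theorem \ref{T4.6} for $F$ and translate its conclusion into the classical definition of topological stability from the introduction. Fix $\epsilon\in(0,1)$ and let $\delta\in(0,1)$ be the constant furnished by Theorem \ref{T4.6} (shrinking it below $1$ if necessary). If $h$ is any continuous self-map of $X$ with $d(f(x),h(x))<\delta$ for all $x\in X$, set $G=\langle h\rangle\in NC(X)$; since $d_1\le d$ and, by compactness of $X$, the continuous function $x\mapsto d_1(f(x),h(x))$ attains its supremum, we get $\gamma(F,G)=\eta(f,h)<\delta$. Theorem \ref{T4.6} then produces a continuous map $k:X\to X$ with $f_i\circ k=k\circ g_i$ for all $i$, that is $f\circ k=k\circ h$, and with $d(k(x),x)<\epsilon$ for all $x\in X$; this is exactly the statement that $f$ is topologically stable in the classical sense.

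There is no genuinely hard step here: all the dynamical content has already been carried out in Theorem \ref{T4.6}. The only points needing attention are bookkeeping ones — that the normalization $0<\epsilon,\delta<1$ of this section and the passage between $d$ and the truncated metric $d_1$ entail no loss (handled above via $d_1\le d$ together with compactness of $X$), and that ``autonomous expansive'' is read as positive expansivity of $f$, so that Theorem \ref{T4.6}, rather than the homeomorphism statement, is the applicable tool and the conclusion recovers \cite[Theorem 4]{WO}.
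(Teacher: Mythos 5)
Your proposal is correct and follows exactly the route the paper intends: Corollary \ref{C4.12} is the autonomous, compact specialization of Theorem \ref{T4.6} (compactness gives Mandelkern local compactness and equicontinuity of $\langle f\rangle$, commutativity is trivial, and NAS expansivity/shadowing reduce to the classical notions), with the only bookkeeping being the normalization of constants and the passage between $d$ and $d_1$, which you handle correctly.
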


\textbf{Acknowledgements:} The first author is supported by CSIR-Junior Research Fellowship (File No.-09/045(1558)/2018-EMR-I) of  Government of India.

\end{document}